\numberwithin{equation}{section}
\newtheorem{dfn}{Definition}[section]
\newtheorem{thm}[dfn]{Theorem}
\newtheorem{lma}[dfn]{Lemma}
\newtheorem{ppsn}[dfn]{Proposition}
\newtheorem{cor}[dfn]{Corollary}
\newcommand\at[3]{#1\,#2\Big|_{#3}}
\newcommand{\R}{\mathbb{R}}
\newcommand{\N}{\mathbb{N}}
\newcommand{\C}{\mathbb{C}}
\newcommand{\Scal}{\mathcal{S}}
\newcommand{\Hcal}{\mathcal{H}}
\newcommand{\Rcal}{\mathcal{R}}
\newcommand{\Wcal}{\mathcal{W}}
\newcommand{\bh}{\mathcal{B}(\Hcal)}
\newcommand{\Tr}{\operatorname{Tr}}
\begin{document}
	\title[Positivity of SSF and infinite-dimensional BMV conjecture]
	{Positivity of spectral shift functions and infinite-dimensional BMV conjecture}
	
	\author[Pradhan]{Chandan Pradhan}
	\address{C. P., Department of Mathematics and Statistics, MSC01 1115, University of New Mexico, Albuquerque, NM 87131, USA}
	\email{chandan.pradhan2108@gmail.com, cpradhan@unm.edu}
	
	\author[Skripka]{Anna Skripka}
	\address{A. S., Department of Mathematics and Statistics, MSC01 1115, University of New Mexico, Albuquerque, NM 87131, USA}
	\email{askripka@unm.edu}
	
	\subjclass[2010]{47A55, 47A56, 44A10}  %15A15, 15A16
	
	\keywords{BMV conjecture, spectral shift function, multilinear operator integral}
	
	\begin{abstract}
	We obtain a solution to the Bessis-Moussa-Villani conjecture for a trace-class perturbation of a semi-bounded operator and answer affirmatively the question on positivity of higher order spectral shift functions in the setting of Schatten--von Neumann perturbations of (possibly unbounded) self-adjoint operators.
		
	\end{abstract}
	
	\maketitle
	
	\section{Introduction}\label{sec:intro}

 In this paper we answer two seemingly unrelated questions in mathematical physics by developing a unified approach that may also apply to other problems. The first question is on the infinite-dimensional BMV conjecture and the second one is on the positivity preserving property of higher order spectral shift functions.

In their work on quantum mechanical systems \cite{BMV}, D.~Bessis, P.~Moussa, and M.~Villani studied the partition function of a perturbed semi-bounded operator and conjectured that the partition function, regarded as a function of the coupling constant, is the Laplace transform of a positive measure (see \cite[(I.3)]{BMV}). The finite-dimensional version of this long-standing problem was ultimately resolved in the celebrated work \cite{Stahl}, where it was shown that for finite self-adjoint matrices $H$ and $V\ge 0$, there exists a positive measure $\nu$ on $[0,\infty)$ such that \begin{align}
\label{lir}
\Tr(e^{H-tV})=\int_{[0,\infty)}e^{-ts}d\nu(s).
\end{align}

If $H-tV$ is an infinite-dimensional operator with essential spectrum, then $\Tr(e^{H-tV})$ is not well defined and a correcting term needs to be subtracted. We establish that if $H$ is bounded from below and $V\ge 0$ is trace-class, then $t\mapsto\Tr( e^{-H}-e^{-H-tV})$ is the L\'{e}vy-Khintchine representation constructed from a positive measure (see Corollary \ref{cor:BMV-1}). It is a particular case of a more general result established in Theorem \ref{thm:BMV-unbdd} for the function $t\mapsto\Tr(f(H+tV)-f(H))$, where $f'$ is completely monotone and $f(H)$ is defined by the standard functional calculus.
We also obtain a result on a variant of the BMV conjecture for non-trace-class $V$ (see Theorem~\ref{bmvsn}) inspired by results of \cite{LiSe12} for finite-dimensional operators.

To discuss spectral shift functions we need to introduce some notation.
Let $\Hcal$ be a separable complex Hilbert space, $\bh$ the algebra of bounded linear operators on $\Hcal$, $n\in\N$, and $\Scal^n$ the $n$th Schatten--von Neumann ideal of operators in $\bh$. Let $C_c^n(\R)$ denote the space of compactly supported $n$-times continuously differentiable functions on $\R$. Let $H$ be a self-adjoint operator densely defined in $\Hcal$, which we call for brevity ``a self-adjoint operator defined in $\Hcal$".	
	Let $n\in\N$, $V \in \Scal^n$, and $f \in C_c^{n+1}(\R)$. Denote by $\Rcal_n(f,H,V)$ the $n$th-order Taylor remainder given by
	\begin{align}\label{eq:rem}
		\Rcal_n(f,H,V)
		= f(H+V) - \sum_{k=0}^{n-1} \frac{1}{k!} \at{\frac{d^k}{dt^k}}{f(H+tV)}{t=0},
	\end{align}
	with G\^{a}teaux derivatives of operator functions calculated in the operator norm. The existence of the latter derivatives is justified in Theorem \ref{thm:differetiation}.
Let $\Omega \subset \mathbb{R}$ be an interval and let $G_\Omega$ denote the closure of the convex hull of the union of the spectra of $H+tV$, that is,
\begin{align}
\label{def:m-M}
G_\Omega=\overline{{\rm c.v.h.}\big(\cup_{t\in\Omega}\sigma(H + tV)\big)}.
\end{align}

	The following fundamental trace formulas  were established in \cite{Krein}, \cite{Koplienko}, and \cite{PSS} for $n=1,2$ and for $n\ge 3$, respectively. We state them for smaller sets of admissible functions sufficient for the purpose of this paper.
	
	\begin{thm}\label{thm:existancessf}
		Let $n \in \N$, and let $H, V$ be self-adjoint operators in $\Hcal$ such that $V \in \Scal^n$. Then, there exists a unique $\eta_{n,H,V} \in L^1(\R)$ such that
		\begin{align}\label{traceformula_inv}
			\Tr\big(\Rcal_n(f,H,V)\big)
			= \int_\R f^{(n)}(\lambda)\,\eta_{n,H,V}(\lambda)\, d\lambda
		\end{align}
		for every $f \in C_c^{n+1}(\R)$.
 The function $\eta_{n,H,V}$ is supported in the set $G_{[0,1]}$ defined in \eqref{def:m-M}.
%and \eqref{traceformula_inv} holds for every $f \in C^{n+1}([m, M])$.
	\end{thm}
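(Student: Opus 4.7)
The plan is to handle uniqueness, the trace-bound estimate leading to a measure representation, the upgrade to an $L^1$-density, and the support property in sequence.

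\textbf{Uniqueness.} If $\eta, \tilde\eta \in L^1(\R)$ both satisfy \eqref{traceformula_inv}, then $T := \eta - \tilde\eta$ satisfies $\int_\R T\, f^{(n)}\,d\lambda = 0$ for every $f \in C_c^{n+1}(\R)$. Integrating by parts in the distributional sense gives $T^{(n)} = 0$, so $T$ coincides a.e.\ with a polynomial of degree at most $n-1$; since $T \in L^1(\R)$, this polynomial must vanish, forcing $\eta = \tilde\eta$.

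\textbf{Existence via MOI.} The driving tool is a representation of the Taylor remainder as a multiple operator integral (MOI). By a Peller-type identity, extended to unbounded $H$ through resolvent approximations,
\[
\Rcal_n(f,H,V) = \int_{\Delta_n} T_{f^{[n]}}^{H+s_1 V,\ldots,H+s_n V}(V,\ldots,V)\, ds,
\]
with $f^{[n]}$ the $n$-th divided difference and $\Delta_n$ the standard simplex. Taking the trace and invoking the multilinear Schatten bound on MOIs yields
\[
\bigl|\Tr\Rcal_n(f,H,V)\bigr| \le c_n \|V\|_{\Scal^n}^n \|f^{(n)}\|_{L^\infty(\R)}.
\]
Consequently, $\Phi_n : f^{(n)} \mapsto \Tr\Rcal_n(f,H,V)$ is a bounded sup-norm functional on the subspace $\{f^{(n)} : f \in C_c^{n+1}(\R)\} \subset C_c(\R)$, and a Hahn--Banach extension followed by Riesz--Markov produces a finite signed Borel measure $\mu_{n,H,V}$ satisfying $\Tr\Rcal_n(f,H,V) = \int f^{(n)}\,d\mu_{n,H,V}$.

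\textbf{$L^1$-regularity and support.} To upgrade $\mu_{n,H,V}$ to an absolutely continuous measure I would approximate $V$ by finite-rank self-adjoint $V_j \to V$ in $\Scal^n$. For each finite-rank $V_j$, the SSF $\eta_{n,H,V_j} \in L^1(\R)$ can be built by the classical constructions -- perturbation determinants for $n=1$, Koplienko's modified determinant for $n=2$, and the Potapov--Skripka--Sukochev construction for $n\ge 3$ -- together with the uniform bound $\|\eta_{n,H,V_j}\|_{L^1} \le c_n \|V_j\|_{\Scal^n}^n$. Weak-$*$ compactness combined with uniqueness forces $d\mu_{n,H,V} = \eta_{n,H,V}\,d\lambda$ for some $\eta_{n,H,V}\in L^1(\R)$. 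For the support claim, if $f\in C_c^{n+1}(\R)$ vanishes on an open neighborhood of $G_{[0,1]}$, then $f(H+tV) = 0$ for every $t\in[0,1]$ by the spectral theorem, so $\Rcal_n(f,H,V) = 0$ and $\int f^{(n)}\eta_{n,H,V}\,d\lambda = 0$. Since $\R\setminus G_{[0,1]}$ is a union of at most two unbounded intervals, the distributional argument of the uniqueness step applied component-wise shows $\eta_{n,H,V}= 0$ a.e.\ on $\R\setminus G_{[0,1]}$.

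\textbf{Main obstacle.} The hard part is the Schatten-$\Scal^1$ bound for the $n$-th MOI in terms of $\|f^{(n)}\|_\infty$ alone when $H$ is unbounded and $n\ge 3$. Already for bounded $H$ this demands the full strength of multilinear Schatten theory (the central contribution of PSS), and transferring the estimate to unbounded $H$ requires careful resolvent-based reductions and uniform control through the finite-rank approximation.
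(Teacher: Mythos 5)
This statement is not proved in the paper: it is quoted verbatim as background, with a citation attributing it to Krein ($n=1$), Koplienko ($n=2$), and Potapov--Skripka--Sukochev \cite{PSS} ($n\geq 3$). So there is no ``paper's own proof'' to compare against; I can only assess your sketch as a stand-alone outline of the known argument.

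Two points in your sketch are genuine gaps rather than omitted details. First, the central difficulty in the $n\geq 3$ case is precisely the \emph{absolute continuity} of the measure, and your argument does not establish it. You produce a finite signed Borel measure $\mu_{n,H,V}$ by Hahn--Banach and Riesz--Markov, then propose to pass to finite-rank $V_j\to V$, use a uniform $L^1$ bound on $\eta_{n,H,V_j}$, and extract a weak-$*$ limit. But a weak-$*$ limit of an $L^1$-bounded sequence of densities is in general only a measure, not an $L^1$ function (consider Fej\'er kernels $\rightharpoonup\delta_0$); ``uniqueness'' identifies the limit measure with $\mu_{n,H,V}$, but says nothing about whether $\mu_{n,H,V}$ is absolutely continuous. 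Worse, your appeal to ``the Potapov--Skripka--Sukochev construction'' to handle the finite-rank case for $n\geq 3$ is circular: PSS's theorem already covers finite-rank $V$, so you would be invoking the result you are trying to prove. The actual PSS proof of absolute continuity is a separate, delicate argument that does not follow from soft compactness.

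Second, the Peller-type identity you write is malformed: $T_{f^{[n]}}$ requires $n+1$ operator slots, while you supply only $n$ (namely $H+s_1 V,\ldots,H+s_n V$). The correct integral form of the remainder (used in the paper in display \eqref{eq:trace_id_0}, derived from Theorem \ref{thm:differetiation}) is
\begin{equation*}
\Rcal_n(f,H,V)=\frac{n!}{(n-1)!}\int_0^1 (1-s)^{n-1}\,T_{f^{[n]}}^{(H+sV)^{n+1}}\big((V)^n\big)\,ds,
\end{equation*}
with all $n+1$ base operators equal to $H+sV$. This is a repairable slip, but as written the estimate chain does not parse.

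Your uniqueness paragraph and the support argument are essentially correct: $f^{(n)}$ ranges over a set of test functions dense enough that $\eta^{(n)}=0$ distributionally forces $\eta$ to agree a.e.\ with a degree-$(n-1)$ polynomial, and integrability on an unbounded interval then kills it. The ``main obstacle'' you flag at the end is also correctly identified, but the proposal does not resolve it.
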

	
	The function $\eta_{n,H,V}$ satisfying the trace formula \eqref{traceformula_inv} is called the spectral shift function of order $n$.
	It is well known that $\eta_{2,H,V} \ge 0$ and that for a sign-definite perturbation $V$ the function $\eta_{1,H,V}$ has the same sign as $V$. The latter properties can be derived from \cite{BiSo73} and their proofs can also be found in \cite[Theorem 4.3]{ChSi12}, \cite[Section~8.2, Theorem~1]{Yafaev92}, and \cite[Proposition 3.3]{Sk17}.
	The sign-definiteness of $\eta_{n,H,V}$ with $n\ge 3$ was also investigated, but apart from the partial cases treated in \cite[Theorem 1.4, Corollary~1.6, Proposition~3.4]{Sk17}, the question remained open in full generality due to the complexity of higher order spectral shift functions.
	The latter can be seen from the explicit formulas for $\eta_{n,H,V}$, $n\ge 3$, derived in \cite[Theorem 5.1(iii)]{ds} for $H\in\bh$ and $V\in\Scal^2$.

 In Theorem \ref{thm:positivity-ssf} we establish that all spectral shift functions of even order are nonnegative, and that those of odd order are also nonnegative (respectively, nonpositive) whenever the perturbation is nonnegative (respectively, nonpositive).
	
Our approach to both main results is based on combining the powerful method of multilinear operator integration and the recent groundbreaking work \cite{Otte}.
As a part of our method, we extend the result of \cite[Corollary 1.2]{Otte} from finite-dimensional to infinite-dimensional operators, with a suitable modification of the formula to ensure that the trace is well defined (see Theorem~\ref{thm:Ext-otte} and Theorem \ref{thm:Ext-otte-unbdd}). The positivity preserving property of the spectral shift functions is applied in the proof of our results on the infinite-dimensional BMV conjecture.
%It is used in the derivation of the L\'{e}vy-Khintchine representation for the function $t\mapsto\Tr(f(H+tV)-f(H))$ with $V\in\Scal^1$ (see Theorem \ref{thm:BMV-unbdd}) and in the derivation of the Berstein representation (via the Laplace transform) for the trace of the operator derivative $t\mapsto\Tr\big(\frac{d^n}{dt^n}f(H+tV)\big)$ with $V\in\Scal^n$ (see Theorem \ref{bmvsn}).
	
	In Section~\ref{sec:Preliminaries} we introduce additional notation and collect preliminaries on multilinear operator integration, in Section~\ref{sec:finite} we discuss the positivity preserving property of the trace of an operator derivative, in Section~\ref{sec_pos_ssf} we establish the positivity preserving property of higher order spectral shift functions, and in Section \ref{sec:bmv} we establish the results on the BMV conjecture in the infinite-dimensional case.

	\section{Multilinear Operator Integration}\label{sec:Preliminaries}
	
	In this section we collect fundamental properties of multilinear operator integrals used in the proof of our main results. A more detailed discussion of the subject can be found in \cite{ST19}.

Let $\Omega\subset\R$ be an interval, where $\subset$ denotes a nonstrict set inclusion.
 Let $C^n(\Omega)$ and $C^\infty(\Omega)$ denote the spaces of all $n$-times continuously differentiable functions and infinitely many times continuously differentiable functions on $\Omega$, respectively.
	
	Recall that the divided difference of $f\in C^n(\Omega)$ of order $n$ is
	%		an operation on the function $f$ of one
	%	(real) variable, and is
	defined recursively as follows:
	\begin{align*}
		&f^{[0]}(\lambda)=f(\lambda),\\
		&f^{[n]}(\lambda_0,\lambda_1,\ldots,\lambda_n)=\begin{cases*}
			\frac{f^{[n-1]}(\lambda_0,\lambda_1,\ldots,\lambda_{n-2},\lambda_n)-f^{[n-1]}(\lambda_0,\lambda_1,\ldots,\lambda_{n-2},\lambda_{n-1})}{\lambda_n-\lambda_{n-1}} \quad \text{if}\quad \lambda_n\neq\lambda_{n-1},\\
			\frac{\partial}{\partial \lambda}f^{[n-1]}(\lambda_0,\lambda_1,\ldots,\lambda_{n-2},\lambda)\big|_{\lambda=\lambda_{n-1}}\quad \text{if}\quad \lambda_n=\lambda_{n-1}.
		\end{cases*}
	\end{align*}

Let $L^p(\R)$ denote the standard $L^p$-space on $\R$ with respect to the  Lebesgue measure.
%Let $L^1(\R)$ be the space of all Lebesgue integrable and $L^\infty(\R)$ the space of essentially bounded functions on $\R$.
For $n\in\N$, let $\Wcal_n(\R)$ denote the Wiener class given by
\begin{align*}
\Wcal_n(\R)
&=\{f\in C^n(\R):\; f^{(n)}\in L^\infty(\R),\, \widehat{f^{(n)}}\in L^1(\R)\},
\end{align*}
where the Fourier transform of a nonintegrable function is understood in the sense of tempered distributions. By a standard exercise in Fourier analysis, %(see, e.g., \cite[Lemma 7]{SuCrell}).
\begin{align*}
\{f\in C^{ n+1}(\R):\; f^{(n)}, f^{(n+1)}\in L^2(\R)\}\subset\Wcal_n(\R).
\end{align*}
In particular, we also have $C_c^{n+1}(\R)\subset \Wcal_n(\R)$.

The following definition of the multilinear operator integral is due to \cite{PSS}. It is stated for a smaller set of admissible functions $f$ sufficient for the purpose of this paper. We adopt the standard convention $\Scal^\infty=\bh$.
	
	\begin{dfn}\label{def:moi}
		Let $n\in\N$, let $H_0,H_1,\ldots,H_n$ be self-adjoint operators in $\Hcal$, and let $f\in \Wcal_n(\R)$. % be such that $\cblue \widehat{f^{(n)}}\in L^1(\R)$.%{\cred ``I think $f^{(n)}\in L^\infty(\R)$ is not sufficient for $\alpha=1$." -- Your condition was also insufficient. Need to write a proper condition here, in terms of classes of functions for which we obtain main results.}
		Let $\alpha,\alpha_k\in[1,\infty]$,  $k=1,\dots,n$, satisfy $\frac{1}{\alpha}=\frac{1}{\alpha_1}+\frac{1}{\alpha_2}+\cdots+\frac{1}{\alpha_n}$ and let $E^i_{l,m}= E_{H_i}\big(\big[\frac{l}{m},\frac{l+1}{m}\big)\big)$ for $m\in \mathbb{N}$ and $l\in \mathbb{Z}$, where $E_{H_i}$ is the spectral measure of $H_i$, $i=0,\dots,n$. Define a multilinear transformation on $\Scal^{\alpha_1}\times\cdots\times\Scal^{\alpha_n}$ by
		\begin{align}\label{a1}
			&T^{H_0,\ldots,H_n}_{f^{[n]}}(V_1,V_2,\ldots,V_n)\\
			\nonumber
			&=\lim_{m\to\infty}\lim_{N\to\infty}\sum_{|l_0|,|l_1|,\ldots,|l_n|\leq N}f^{[n]}\Big(\frac{l_0}{m},\frac{l_1}{m},\ldots,\frac{l_n}{m}\Big)E_{l_0,m}^0V_1E_{l_1,m}^1
			V_2E_{l_2,m}^2\cdots V_n E_{l_n,m}^n,
		\end{align}
		where the limits are evaluated in the norm $\|\cdot\|_\alpha$. The existence of the limits in \eqref{a1} is justified in \cite[Lemmas 3.5, 5.1, 5.2]{PSS}, and the transformation $T^{H_0,\ldots,H_n}_{f^{[n]}}$ is called a multilinear operator integral
		with symbol $f^{[n]}$.
	\end{dfn}
	
	%It follows from {\cblue \cite[Lemmas 3.5, 5.1, 5.2]{PSS}} that the multilinear operator integral given by \eqref{a1} is bounded for every $f\in C^{n+1}_c(\R)$.
	
	The following estimate is an extension of the result of \cite[Theorem 5.3 and Remark 5.4]{PSS} for $H_0=\cdots=H_n$ to the case of distinct $H_0,\dots,H_n$, which proof is discussed in \cite[Theorem 4.3.10]{ST19}.
	\begin{thm}\label{inv-thm}
		Let $n\in\mathbb{N}$ and let $\alpha,\alpha_1,\ldots,\alpha_n\in(1,\infty)$ satisfy $\tfrac{1}{\alpha_1}+\cdots+\tfrac{1}{\alpha_n}=\tfrac{1}{\alpha}$. Let $H_0,\ldots, H_{n}$ be self-adjoint operators in $\Hcal$. Assume that $V_\ell\in\mathcal{S}^{\alpha_\ell},\, 1\leq \ell\leq n$. Then, there exists a positive constant $c_{\alpha,n}$ depending only on $\alpha, n$ such that
		\begin{align}\label{est}
			\big\|T^{H_0,\ldots,H_{n}}_{f^{[n]}}(V_1,V_2,\ldots,V_n)\big\|_{\alpha}\leq c_{\alpha,n}\, \|f^{(n)}\|_\infty \prod\limits_{1\leq \ell \leq n}\|V_\ell\|_{\alpha_\ell}
		\end{align}
		for every $f\in C_c^{n+1}(\R)$.
	\end{thm}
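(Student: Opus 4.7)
The plan is to reduce the case of distinct self-adjoint operators $H_0,\ldots,H_n$ to the case $H_0=\cdots=H_n$ already established in \cite[Theorem 5.3 and Remark 5.4]{PSS}, by means of a standard block-diagonal dilation. Set $\tilde{\Hcal}=\bigoplus_{i=0}^{n}\Hcal$ with orthogonal projections $P_i\in\bh$ onto the $i$-th summand, and form the self-adjoint operator $\tilde{H}=\bigoplus_{i=0}^{n}H_i$ on $\tilde{\Hcal}$. For each $\ell\in\{1,\ldots,n\}$, embed $V_\ell$ as the off-diagonal operator $\tilde{V}_\ell$ whose only nonzero block is $V_\ell$ sitting in block-position $(\ell-1,\ell)$; equivalently, $\tilde{V}_\ell=P_{\ell-1}\tilde{V}_\ell P_\ell$. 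By construction $\|\tilde{V}_\ell\|_{\alpha_\ell}=\|V_\ell\|_{\alpha_\ell}$ for every $\ell$.

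The key observation is that the spectral measure of $\tilde{H}$ is block-diagonal, $E_{\tilde{H}}(\Delta)=\bigoplus_{i=0}^{n}E_{H_i}(\Delta)$ for every Borel $\Delta\subset\R$. Substituting this into the defining Riemann sum \eqref{a1} for $T^{\tilde{H},\ldots,\tilde{H}}_{f^{[n]}}(\tilde{V}_1,\ldots,\tilde{V}_n)$ and exploiting the off-diagonal structure of each $\tilde{V}_\ell$, every chain
\begin{align*}
E^{\tilde{H}}_{l_0,m}\,\tilde{V}_1\,E^{\tilde{H}}_{l_1,m}\,\tilde{V}_2\,\cdots\,\tilde{V}_n\,E^{\tilde{H}}_{l_n,m}
\end{align*}
collapses to $E^{H_0}_{l_0,m}V_1 E^{H_1}_{l_1,m}V_2\cdots V_n E^{H_n}_{l_n,m}$ sitting in the $(0,n)$-block. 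Passing to the iterated limit, which is justified on both sides by \cite[Lemmas 3.5, 5.1, 5.2]{PSS}, one obtains
\begin{align*}
T^{\tilde{H},\ldots,\tilde{H}}_{f^{[n]}}(\tilde{V}_1,\ldots,\tilde{V}_n)=P_0\,T^{\tilde{H},\ldots,\tilde{H}}_{f^{[n]}}(\tilde{V}_1,\ldots,\tilde{V}_n)\,P_n,
\end{align*}
and this common operator, regarded under the canonical identification of the $(0,n)$-block with $\bh$, equals $T^{H_0,\ldots,H_n}_{f^{[n]}}(V_1,\ldots,V_n)$.

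Applying the equal-operator estimate of \cite[Theorem 5.3, Remark 5.4]{PSS} to $\tilde{H}$ yields
\begin{align*}
\big\|T^{\tilde{H},\ldots,\tilde{H}}_{f^{[n]}}(\tilde{V}_1,\ldots,\tilde{V}_n)\big\|_\alpha\leq c_{\alpha,n}\,\|f^{(n)}\|_\infty\prod_{\ell=1}^{n}\|\tilde{V}_\ell\|_{\alpha_\ell}.
\end{align*}
Combining with the block identity of the previous paragraph and the fact that $\|\tilde{V}_\ell\|_{\alpha_\ell}=\|V_\ell\|_{\alpha_\ell}$, the target estimate \eqref{est} follows immediately.

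The main technical point, and the only one requiring care, is the verification of the block identity: one must check that the iterated limits in $m$ and $N$ are compatible with the left and right multiplication by the fixed projections $P_0$ and $P_n$. This compatibility is immediate because multiplication by a bounded operator is $\|\cdot\|_\alpha$-continuous, so the identity can be verified at the level of the partial Riemann sums, where it is a direct computation using the block-diagonal form of $E_{\tilde{H}}(\Delta)$ and the off-diagonal form of the $\tilde{V}_\ell$. Once the dilation is set up correctly, the rest of the argument is purely formal.
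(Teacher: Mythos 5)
Your dilation argument is correct: the block computation showing that the Riemann sums for $T^{\tilde H,\ldots,\tilde H}_{f^{[n]}}(\tilde V_1,\ldots,\tilde V_n)$ collapse to the $(0,n)$-block is right, the identification preserves Schatten norms because each $\tilde V_\ell$ and the resulting MOI have exactly one nonzero block, and the passage to the iterated limit is justified exactly as you say. The paper itself gives no proof of Theorem~\ref{inv-thm}, instead citing the monograph \cite{ST19} (Theorem~4.3.10), and the block-diagonal dilation you use is the standard device by which that reference reduces the distinct-$H_i$ case to the $H_0=\cdots=H_n$ estimate of \cite{PSS}, so your argument is essentially the same as the one the authors had in mind. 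One small stylistic remark: rather than appealing to $\|\cdot\|_\alpha$-continuity of multiplication by $P_0$ and $P_n$, it is cleaner to observe that the map sending an operator $A\in\Scal^\alpha(\Hcal)$ to the block operator with $A$ in the $(0,n)$-slot and zeros elsewhere is a linear isometry of $\Scal^\alpha(\Hcal)$ into $\Scal^\alpha(\tilde\Hcal)$; the partial Riemann sums on the two sides correspond under this isometry, so the limits do as well, without any separate continuity argument.
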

	
	For an operator $T$ and an nonnegative integer $k$, we define
	\[
	(T)^k :=
	\begin{cases}
		\underbrace{T, \dots, T}_{k \text{ times}}, & \text{if } k \neq 0,\\[2mm]
		\varnothing, & \text{if } k = 0,
	\end{cases}
	\]
	where $\varnothing$ denotes the empty tuple. Throughout this article, we will use this notation frequently.
	
	Applying the cyclicity of the trace, H\"{o}lder's inequality, and \eqref{est} yields the bound for the trace of a multilinear operator integral given below.
	
	\begin{cor}\label{thm:tr-est}
		Let $n\in\mathbb{N}$, $n\geq 3$, and let $\alpha_1,\ldots,\alpha_n\in(1,\infty)$ satisfy $\tfrac{1}{\alpha_1}+\cdots+\tfrac{1}{\alpha_n}=1$. Let $H, K, L$ be self-adjoint operators in $\Hcal$. Assume that $V_\ell\in\Scal^{\alpha_l},\, 1\leq \ell\leq n$. Then, there exists $c_n>0$ such that
		\begin{align}\label{ss}
			\big|\Tr(T^{H,K,L,(H)^{n-2}}_{f^{[n]}}(V_1,\ldots,V_n))\big|&\leq c_{n}\|f^{(n)}\|_\infty\prod\limits_{1\leq \ell \leq n}\|V_\ell\|_{\alpha_\ell}
		\end{align}
		for every $f\in C_c^{n+1}(\R)$.
	\end{cor}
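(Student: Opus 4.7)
The plan is to combine the cyclicity of the trace with Hölder's inequality to reduce the trace bound to the Schatten-norm estimate \eqref{est} of Theorem \ref{inv-thm}.

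First, I would apply the cyclic invariance of the trace termwise to the finite sums defining the MOI in \eqref{a1}, and then pass to the limit, to obtain
\begin{align*}
\Tr\big(T^{H,K,L,(H)^{n-2}}_{f^{[n]}}(V_1,\ldots,V_n)\big) = \Tr(V_1 \cdot Y),
\end{align*}
where $Y$ is the operator obtained by cycling the leading spectral projection $E^H_{l_0}$ to the end of each summand and factoring out $V_1$. Using the symmetry of $f^{[n]}$ in its $n+1$ arguments, $Y$ is identified with the multilinear operator integral
\begin{align*}
Y = T^{K,L,(H)^{n-2},H}_{f^{[n]}}(V_2, V_3, \ldots, V_n, I),
\end{align*}
in which the identity operator $I$ appears as the last argument.

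Second, I would apply Hölder's inequality for Schatten norms,
\begin{align*}
\big|\Tr(V_1 \cdot Y)\big| \le \|V_1\|_{\alpha_1}\, \|Y\|_{\alpha_1'},
\end{align*}
where $\alpha_1' = \alpha_1/(\alpha_1-1) \in (1,\infty)$ is the conjugate exponent to $\alpha_1$. The hypothesis $\sum_{\ell=1}^n 1/\alpha_\ell = 1$ yields $\sum_{\ell=2}^n 1/\alpha_\ell = 1/\alpha_1'$, so the Schatten exponents of $V_2,\ldots,V_n$ combine correctly for the target norm $\alpha_1'$.

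Third, I would invoke the estimate \eqref{est} with target exponent $\alpha = \alpha_1' \in (1,\infty)$ to bound
\begin{align*}
\|Y\|_{\alpha_1'} \le c_{\alpha_1',n}\, \|f^{(n)}\|_\infty \prod_{\ell=2}^n \|V_\ell\|_{\alpha_\ell},
\end{align*}
where $I$ (treated as an $\Scal^\infty = \bh$ argument) contributes the factor $\|I\|_\infty = 1$. Combining the three steps yields the desired inequality with $c_n := c_{\alpha_1',n}$.

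The main technical hurdle will be the third step: Theorem \ref{inv-thm} as stated restricts to $\alpha_\ell \in (1,\infty)$, whereas the above invocation requires allowing one exponent to be $\infty$ (for the identity operator). This extension is standard in the MOI literature and is available in \cite{ST19}. Alternatively, since $H_0 = H_n = H$ one may collapse the product $E^H_{l_n} E^H_{l_0}$ into a single projection, thereby rewriting $Y$ as an MOI of order $n-1$ with the bounded symbol $(\lambda_0,\ldots,\lambda_{n-1}) \mapsto f^{[n]}(\lambda_0,\ldots,\lambda_{n-1},\lambda_0)$, which admits an analogous Schatten estimate through the same divided-difference techniques that underlie \eqref{est}.
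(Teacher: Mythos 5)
Your proposal is correct and is essentially the paper's own argument: the authors justify the corollary with exactly this recipe — cycle inside the trace, apply H\"older's inequality, and invoke the Schatten estimate \eqref{est} of Theorem \ref{inv-thm}. The hurdle you flag is real but is precisely the detail the paper also leaves to the cited literature: the literal statement of Theorem \ref{inv-thm} has all exponents in $(1,\infty)$, so one either uses the known extension allowing one argument in $\Scal^\infty=\bh$, or exploits $H_0=H_n=H$ to collapse $E^H_{l_n}E^H_{l_0}=\delta_{l_n,l_0}E^H_{l_0}$ and estimate the resulting $(n-1)$-linear integral with symbol $f^{[n]}(\lambda_0,\lambda_1,\ldots,\lambda_{n-1},\lambda_0)$, both of which are handled by the same machinery behind \eqref{est} in \cite{PSS} and \cite{ST19}.
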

	
	We will also need the following perturbation formula for multilinear operator integrals obtained in \cite[Lemma~4.2]{LeSk20}.
	
	\begin{lma}\label{lem:moi_pert_form}
		Let $n \in \N$, $n \ge 2$, and $f \in \Wcal_{n-1}(\R) \cap \Wcal_n(\R)$.
		Let $H, K, H_1, \ldots, H_{n-1}$ be self-adjoint operators in $\Hcal$ such that $H - K \in \bh$,
		and let $V_1, \ldots, V_n \in \mathcal{B}(\Hcal)$.
		Then for every $i = 1, \ldots, n$,
		\begin{align*}
			& T^{H_1, \ldots, H_{i-1}, H, H_i, \ldots, H_{n-1}}_{f^{[n-1]}}
			(V_1, V_2, \ldots, V_{n-1})
			- T^{H_1, \ldots, H_{i-1}, K, H_i, \ldots, H_{n-1}}_{f^{[n-1]}}
			(V_1, V_2, \ldots, V_{n-1}) \\
			&\qquad =
			T^{H_1, \ldots, H_{i-1}, H, K, H_i, \ldots, H_{n-1}}_{f^{[n]}}
			(V_1, \ldots, V_{i-1}, H - K, V_i, \ldots, V_{n-1}).
		\end{align*}
	\end{lma}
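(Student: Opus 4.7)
The plan is to follow the standard perturbation-formula strategy for multilinear operator integrals: express both sides via the double-limit Riemann sums from Definition \ref{def:moi}, manipulate the finite sums using the recursive definition of divided differences together with the completeness relation $\sum_{l\in\Z} E^i_{l,m} = I$ for spectral projections, and then pass to the limit. The key algebraic identity that drives everything is the divided-difference recursion, which after symmetrizing reads
\begin{align*}
f^{[n-1]}(\lambda_0,\ldots,\lambda,\ldots,\lambda_{n-1}) - f^{[n-1]}(\lambda_0,\ldots,\mu,\ldots,\lambda_{n-1})
= (\lambda-\mu)\, f^{[n]}(\lambda_0,\ldots,\lambda,\mu,\ldots,\lambda_{n-1}),
\end{align*}
where $\lambda$ and $\mu$ sit in the $i$-th slot of the respective symbols.

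First I would fix $i$ and write out the right-hand side as the double limit of
\begin{align*}
\sum_{|l_0|,\ldots,|l_n|\le N} f^{[n]}\!\left(\tfrac{l_0}{m},\ldots,\tfrac{l_{i-1}}{m},\tfrac{l_i}{m},\ldots,\tfrac{l_n}{m}\right) E^{H_1}_{l_0,m} V_1 \cdots V_{i-1}\, E^H_{l_{i-1},m}\, (H-K)\, E^K_{l_i,m}\, V_i \cdots V_{n-1}\, E^{H_{n-1}}_{l_n,m}.
\end{align*}
The central observation is that $E^H_{l_{i-1},m}(H-K) E^K_{l_i,m}$ can, up to an error of order $1/m$, be replaced by the scalar multiple $\big(\tfrac{l_{i-1}}{m}-\tfrac{l_i}{m}\big) E^H_{l_{i-1},m} E^K_{l_i,m}$, using $H E^H_{l_{i-1},m} = \int_{l_{i-1}/m}^{(l_{i-1}+1)/m}\lambda\,dE_H(\lambda)$ and the analogous identity for $K$. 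Once this scalar is produced, the divided-difference identity above converts $\big(\tfrac{l_{i-1}-l_i}{m}\big) f^{[n]}$ into the difference $f^{[n-1]}(\ldots,\tfrac{l_{i-1}}{m},\ldots) - f^{[n-1]}(\ldots,\tfrac{l_i}{m},\ldots)$, splitting the sum into two pieces.

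Second, in each of the two resulting sums exactly one of the indices $l_{i-1}$ or $l_i$ no longer appears in the divided-difference factor, so summing it out via $\sum_{l} E^H_{l,m} = I$ (respectively $\sum_{l} E^K_{l,m} = I$) collapses that slot and reproduces precisely one of the two MOIs on the left-hand side. Passing to the iterated limit $N\to\infty$ then $m\to\infty$ then yields the claimed identity, with convergence justified by the existence theorems for MOIs in \cite{PSS} under the joint hypothesis $f\in\Wcal_{n-1}(\R)\cap\Wcal_n(\R)$, which ensures that both the $(n-1)$-th and $n$-th order MOIs are well-defined in $\bh$.

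The main obstacle is the bookkeeping around the $O(1/m)$ error from replacing $E^H_{l,m}(H-K)E^K_{l',m}$ by a scalar multiple of $E^H_{l,m}E^K_{l',m}$: one must show that the total error, summed over $l_{i-1}, l_i$, tends to zero in the operator norm as $m\to\infty$. This uses crucially the assumption $H-K\in\bh$ (so that $\|H-K\|$ bounds the error uniformly) together with the boundedness of the $V_j$'s and of $f^{[n]}$ on the relevant grid. A secondary care point is the order of limits and the justification of replacing weak/strong operator convergence of the projection sums by norm convergence when multiplied against the bounded factors $V_j$; this follows by the standard telescoping argument combined with the norm estimate \eqref{est} applied to a smooth truncation of $f$.
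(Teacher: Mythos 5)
The paper does not prove this lemma; it cites it verbatim from \cite[Lemma~4.2]{LeSk20}, so there is no in-paper proof to compare against line by line. That said, the proof in \cite{LeSk20} is carried out through the integral representation of the MOI (the same $\int_{\R}\int_{\Delta_{n}} e^{is_0 t H_0}V_1\cdots V_n e^{is_n t H_n}\,\widehat{f^{(n)}}(t)\,d\sigma\,dt$ formula used in Lemma~\ref{lem:moi-cont}), combined with a Duhamel-type identity for $e^{istH}-e^{istK}$ and the relation between $\widehat{f^{(n)}}$ and $\widehat{f^{(n-1)}}$ — this is exactly why the hypothesis $f\in\Wcal_{n-1}(\R)\cap\Wcal_n(\R)$ appears. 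Your Riemann-sum route is genuinely different, and its algebraic skeleton (discretize, approximate the middle sandwich by a scalar, apply the divided-difference recursion, resum one index via completeness of spectral projections) is sound in principle.

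However, as written the proposal has a real gap at precisely the place you flag: the error analysis. First, the per-term operator-norm error in replacing $E^{H}_{l,m}(H-K)E^{K}_{l',m}$ by $\big(\tfrac{l}{m}-\tfrac{l'}{m}\big)E^{H}_{l,m}E^{K}_{l',m}$ is $O(1/m)$ because $\|(H-\tfrac{l}{m}I)E^{H}_{l,m}\|\le\tfrac{1}{m}$, not because $H-K$ is bounded — the role of $H-K\in\bh$ is solely to make the $n$-th order MOI on the right-hand side well defined. Second, and more importantly, a per-term $O(1/m)$ bound does not by itself control the total error, since after letting $N\to\infty$ the sum has infinitely many terms; one must repackage the error as an MOI with a perturbation of norm $O(1/m)$ (for instance, with one $V_j$ replaced by $V_j\,r_m(H)$ where $r_m(\lambda)=\lambda-\tfrac{1}{m}\lfloor m\lambda\rfloor$ and $\|r_m\|_\infty\le\tfrac{1}{m}$) and then invoke an operator-norm bound for the MOI. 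Your appeal to estimate \eqref{est} does not serve here: Theorem~\ref{inv-thm} requires $\alpha,\alpha_\ell\in(1,\infty)$, whereas the $V_j$ in the lemma are only in $\bh$; you need instead the $\|\cdot\|_\infty$-bound $\|T_{f^{[n]}}\|\lesssim\|\widehat{f^{(n)}}\|_1\prod\|V_\ell\|$ coming from the integral representation. Once you invoke that bound, though, you have effectively moved onto the Fourier side, at which point it is cleaner to prove the whole identity there via Duhamel and avoid the discretization error entirely. So the discretization route can be made rigorous, but the missing pieces are exactly the nontrivial ones, and the cited source takes the more economical path.
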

	
	\smallskip
	
	The following continuity result is needed in the sequel.
	
	\begin{lma}\label{lem:moi-cont}
		Let $n \in \N$.
		Let $H_0, \ldots, H_n, H_{0k}, \ldots, H_{nk}$ be self-adjoint operators in $\Hcal$ such that $H_{ik} \to H_i$ resolvent strongly as $k\rightarrow\infty$ for $i=0,\dots,n$ and let $V_k, V \in \Scal^n$ be self-adjoint operators such that $\|V_k-V\|_n\rightarrow 0$ as $k\rightarrow\infty$. Then, for each $f \in \Wcal_n(\R)$,
		\begin{align*}
			\big\| T_{f^{[n]}}^{H_{0k},\ldots,H_{nk}}\big( (V_k)^n \big)
			- T_{f^{[n]}}^{H_0, \ldots, H_n}\big( (V)^n \big) \big\|_1 \to 0
			%& \big\| T_{f^{[n]}}^{(H_k)^{n+1}}\big( (V_k)^n \big)
			%- T_{f^{[n]}}^{(H)^{n+1}}\big( (V_k)^n \big) \big\|_1 \to 0,
		\end{align*}
		as $k \to \infty$.
	\end{lma}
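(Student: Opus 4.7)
The plan is to split the difference into a perturbation-change term and an operator-change term,
\begin{align*}
& T_{f^{[n]}}^{H_{0k},\ldots,H_{nk}}\bigl((V_k)^n\bigr) - T_{f^{[n]}}^{H_0,\ldots,H_n}\bigl((V)^n\bigr) \\
&\quad = \underbrace{\bigl[T_{f^{[n]}}^{H_{0k},\ldots,H_{nk}}\bigl((V_k)^n\bigr) - T_{f^{[n]}}^{H_{0k},\ldots,H_{nk}}\bigl((V)^n\bigr)\bigr]}_{=: A_k} + \underbrace{\bigl[T_{f^{[n]}}^{H_{0k},\ldots,H_{nk}}\bigl((V)^n\bigr) - T_{f^{[n]}}^{H_0,\ldots,H_n}\bigl((V)^n\bigr)\bigr]}_{=: B_k},
\end{align*}
and to bound each in $\Scal^1$-norm separately.

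For $A_k$, I use the multilinearity of the MOI to expand it as a telescoping sum of $n$ terms, each carrying exactly one copy of $V_k - V$ among its arguments and $n-1$ copies of $V$ or $V_k$ in the remaining slots. Applying the trace-norm MOI estimate for Wiener-class symbols (which holds with constant $\|\widehat{f^{(n)}}\|_{L^1(\R)}$ via the integral representation of $f^{[n]}$, cf.\ \cite{PSS}), each summand is bounded by a constant depending only on $f$ times $\max(\|V\|_n,\|V_k\|_n)^{n-1}\|V_k-V\|_n$. Since $\|V_k\|_n$ is bounded and $\|V_k-V\|_n\to 0$, this gives $\|A_k\|_1\to 0$.

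For $B_k$, the plan is to use the Wiener-class integral representation
\[
T_{f^{[n]}}^{H_0,\ldots,H_n}\bigl((V)^n\bigr) = \int_{\Delta_n}\!\!\int_\R \widehat{f^{(n)}}(t)\, e^{its_0 H_0}\,V\,e^{its_1 H_1}\,V\cdots V\,e^{its_n H_n}\,dt\,ds_1\cdots ds_n,
\]
where $\Delta_n=\{(s_0,\ldots,s_n):s_j\ge 0,\;\sum_j s_j=1\}$. Writing $\Phi_k(t,s)$ and $\Phi(t,s)$ for the integrands with the $H_{ik}$ and $H_i$ in place, H\"older's inequality provides the uniform envelope $\|\Phi_k(t,s)\|_1\le\|V\|_n^n$. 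Combined with $\widehat{f^{(n)}}\in L^1(\R)$ and compactness of $\Delta_n$, dominated convergence in $\Scal^1$ reduces the task to proving $\|\Phi_k(t,s)-\Phi(t,s)\|_1\to 0$ pointwise in $(t,s)$.

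The main obstacle is this last pointwise $\Scal^1$-convergence, since resolvent strong convergence $H_{ik}\to H_i$ only yields strong (not operator-norm) convergence $e^{i\tau H_{ik}}\to e^{i\tau H_i}$. I would circumvent this by approximating $V$ in $\Scal^n$-norm by a finite-rank operator $V^{(N)}$. By the same multilinear telescoping used for $A_k$, replacing $V$ by $V^{(N)}$ inside both $\Phi_k$ and $\Phi$ perturbs the $\Scal^1$-norm, uniformly in $k$, by at most a constant multiple of $\|V-V^{(N)}\|_n$. The corresponding $\Phi_k^{(N)}$ has rank bounded independently of $k$ (by $\mathrm{rank}(V^{(N)})$), and strong convergence of the unitaries applied on and off the fixed finite-dimensional range of $V^{(N)}$ (passing to adjoints where needed) yields operator-norm, hence $\Scal^1$, convergence $\Phi_k^{(N)}\to\Phi^{(N)}$ via a further position-wise telescoping. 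A standard $\varepsilon/3$ argument (first choose $N$ large, then $k$ large) completes the proof.
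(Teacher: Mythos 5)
Your proof is correct but follows a genuinely different route from the paper's. The paper passes immediately to the integral representation on $\R\times\Delta_n$, telescopes the integrand as a sum of terms of the form
$\cdots(e^{is_jtH_{jk}}V_k - e^{is_jtH_j}V)\cdots$,
and invokes Gr\"umm's convergence theorem to get $\|e^{is_jtH_{jk}}V_k - e^{is_jtH_j}V\|_n\to 0$ in one stroke (strong operator convergence of the bounded factors together with $\Scal^n$-convergence of the other factor), then concludes by dominated convergence for the Bochner integral. You instead decouple the two sources of change: the $A_k$ term (change in $V$) you control by multilinearity and the trace-norm MOI bound, and the $B_k$ term (change in the $H_i$) you control by a finite-rank truncation $V^{(N)}$ of $V$, reducing pointwise $\Scal^1$-convergence of the integrand to norm convergence of finite-rank products under strong (and, on the right, adjoint-strong, which holds for unitary groups) convergence of the $e^{i\tau H_{ik}}$, followed by an $\varepsilon/3$ argument. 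In effect you are re-proving, inline, exactly the special case of Gr\"umm's theorem that the paper cites. Both arguments close with the same dominated convergence step and the same $\Scal^1$-envelope $\|V\|_n^n$ on the integrand. The paper's route is shorter by virtue of the citation; yours is more self-contained. One small point worth making explicit in a final write-up: the right-sided convergence $\|V^{(N)}(e^{i\tau H_{nk}}-e^{i\tau H_n})\|\to 0$ uses that for unitary groups strong resolvent convergence also gives $e^{-i\tau H_{nk}}\to e^{-i\tau H_n}$ strongly, so the adjoints converge strongly as required; you flag this ("passing to adjoints where needed") but it deserves a sentence.
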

	
	\begin{proof}
		By  \cite[Lemmas 3.5, 5.1, 5.2]{PSS},
		\begin{align}
			T_{f^{[n]}}^{H_{0k},\ldots,H_{nk}}\big( (V_k)^n \big)
			&= \int_{\R} \int_{\Delta_n}
			e^{is_0t H_{0k}} V_k e^{is_1t H_{1k}} V_k \cdots V_k e^{is_n t H_{nk}} \,
			\widehat{f^{(n)}}(t) \, d\sigma\,dt,
		\end{align}
		where the simplex
		\[\Delta_n=\{s=(s_0,\ldots,s_n)\in[0,\infty)^{n+1}:\; s_0+\cdots+s_n=1\}\]
		is endowed with the Lebesgue measure $\sigma$. Note that $\nu_f\times\sigma$, where $d\nu_f(t)=\widehat{f^{(n)}}(t)\,dt$, is a finite measure on $\R\times\Delta_n$ with total variation $\frac{1}{n!}\|\widehat{f^{(n)}}\|_1$.
		
		Since $H_{jk}$ converges resolvent strongly to $H_j$, it follows from \cite[Theorem~VIII.20(b)]{ReedSimonI} that $e^{i  s_j t H_{jk}} \to e^{i s_j t H_j}$  in the strong operator topology for every $j=0,\dots,n$.
Since we also have $\|V_k-V\|_n\rightarrow 0$, by Gr\"{u}mm's well-known convergence theorem \cite[Theorem 1]{G}, we obtain that $\|e^{i s_j t H_{jk}} V_k-e^{i s_j t H_j} V\|_n\to 0$ and $ \|Ve^{i s_j t H_{jk}}-Ve^{i s_j t H_j}\|_n\to 0$. Moreover, there exists a constant $L > 0$ such that $\sup_k \bigl\{ \|V_k\|_n, \|V\|_n \bigr\} \le L$. Therefore, by telescoping, the triangle inequality and H\"older’s inequalities for Schatten norms, we obtain
		\begin{align*}
			&\Big\| e^{is_0 t H_{0k}} V_k e^{is_1 t H_{1k}} V_k \cdots V_k e^{is_n t H_{nk}}
			- e^{is_0 t H_0} V e^{is_1 t H_1} V \cdots V e^{is_n t H_n} \Big\|_1 \\
			=& \Big\| \big(e^{is_0 t H_{0k}} V_k - e^{is_0 t H_{0}} V\big) e^{is_1 t H_{1k}} V_k \cdots V_k e^{is_n t H_{nk}}\\
			&\hspace*{1.2in} + e^{is_0 t H_{0}} V \big(e^{is_1 t H_{1k}} V_k - e^{is_1 t H_{1}} V\big)e^{is_2 t H_{2k}} V_k \cdots V_k e^{is_n t H_{nk}}\\
			&\hspace*{1.5in}+\cdots+ e^{is_0 t H_{0}} V\cdots e^{is_{n-2} t H_{n-2}}V \big(e^{is_{n-1} t H_{(n-1)k}} V_k - e^{is_{n-1} t H_{n-1}} V\big)e^{is_n t H_{nk}}\\
			&\hspace*{2in}  +  e^{is_0 t H_{0}} V\cdots V e^{is_{n-1} t H_{n-1}} \big(V e^{is_{n} t H_{nk}} - V e^{is_{n-1} t H_{n}} \big)\Big\|_1\\
			\leq& L^{n-1}\sum_{j=0}^{n-1} \Big\|e^{is_j t H_{jk}} V_k - e^{is_j t H_{j}} V\Big\|_n + L^{n-1} \Big\|V e^{is_{n} t H_{nk}} - V e^{is_{n-1} t H_{n}}\Big\|_n\to 0
		\end{align*}
		as $k\to \infty$ and
		\begin{align*}
			\| e^{is_0 t H_{0k}} V_k e^{is_1 t H_{1k}} V_k \cdots V_k e^{is_n t H_{nk}}
			- e^{is_0 t H_0} V e^{is_1 t H_1} V \cdots V e^{is_n t H_n} \|_1\leq 2L^n.
		\end{align*}
		Finally, an application of the dominated convergence theorem
		for Bochner integrals completes the proof.
	\end{proof}

	\smallskip
	
	\begin{thm}\label{thm:differetiation}
Let $n \in \N$, let $H, V$ be self-adjoint operators in $\Hcal$ such that $V \in \Scal^n$, and let $ f \in \cap_{k=1}^n\Wcal_k(\R)$.
Then, the map $\R \ni t \mapsto f(H + tV)$ is $k$-times differentiable in the operator norm and
		\begin{align*}
			\frac{1}{k!}
			\at{\frac{d^{k}}{dt^{k}}}{f(H + tV)}{t = s}
			= T_{f^{[k]}}^{(H + sV)^{k+1}}\!\big((V)^k\big)
		\end{align*}
for each $k =1,\ldots, n$.
		Moreover, the map $s \mapsto\frac{d^{n-1}}{dt^{n-1}}f(H + tV)\big|_{t = s}$
		is differentiable in the norm $\|\cdot\|_1$ and
		\begin{align*}
			\frac{1}{n!}
			\at{\frac{d^{n}}{dt^{n}}}{f(H + tV)}{t = s}
			= T_{f^{[n]}}^{(H + sV)^{n+1}}\!\big((V)^n\big).
		\end{align*}
	\end{thm}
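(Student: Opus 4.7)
The plan is to prove the two claims simultaneously by induction on $k$, using Lemma~\ref{lem:moi_pert_form} to convert difference quotients into multilinear operator integrals (MOIs) of one higher order, and (an extension of) Lemma~\ref{lem:moi-cont} to pass to the limit in the appropriate Schatten norm.

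For the base case $k=1$, I would invoke the first-order perturbation identity
\[
f(H + (s+h)V) - f(H + sV) = h\, T_{f^{[1]}}^{H+(s+h)V,\, H+sV}(V),
\]
valid for $f \in \Wcal_1(\R)$ via the Bochner integral representation underlying Definition~\ref{def:moi} (this is the $n=1$ analogue of Lemma~\ref{lem:moi_pert_form}; see \cite{PSS}). Dividing by $h$ and letting $h \to 0$, Lemma~\ref{lem:moi-cont} with $V_k \equiv V$ and $H + (s+h_k)V \to H + sV$ in operator norm (hence resolvent strongly) yields the formula for $k=1$.

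For the inductive step, assuming the formula at level $k-1$, I would form
\[
\Delta_h := T_{f^{[k-1]}}^{(H + (s+h)V)^{k}}\bigl((V)^{k-1}\bigr) - T_{f^{[k-1]}}^{(H + sV)^{k}}\bigl((V)^{k-1}\bigr)
\]
and telescope over the $k$ positions in the operator tuple. Each single-position replacement of $H+sV$ by $H+(s+h)V$ yields, via Lemma~\ref{lem:moi_pert_form} applied with perturbation $(H+(s+h)V) - (H+sV) = hV$, a summand of the form
\[
h\, T_{f^{[k]}}^{(H+(s+h)V)^{i},\, (H+sV)^{k-i+1}}\bigl((V)^{i-1},\, V,\, (V)^{k-i}\bigr).
\]
Dividing by $h$ and letting $h \to 0$, each of the $k$ summands converges to $T_{f^{[k]}}^{(H+sV)^{k+1}}((V)^{k})$, and combining with the factor $(k-1)!$ from the inductive hypothesis produces $k!\, T_{f^{[k]}}^{(H+sV)^{k+1}}((V)^{k})$ as required.

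It remains to specify the norm of convergence. H\"{o}lder's inequality shows that an MOI with $k$ copies of $V \in \Scal^{n}$ lies in $\Scal^{n/k}$. For $k \le n-1$, $\Scal^{n/k}$ embeds continuously in $\bh$, so $\|\cdot\|_{n/k}$-convergence of the difference quotient implies operator-norm convergence. For $k=n$, the MOI lies in $\Scal^{1}$, giving trace-norm convergence as stated in the ``moreover'' part. The main obstacle is that Lemma~\ref{lem:moi-cont} is formulated only for $n$ copies of $V \in \Scal^{n}$ landing in $\Scal^{1}$, whereas the induction requires the analogous statement with $k \le n$ copies of $V$ and convergence in $\Scal^{n/k}$. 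This extension, however, follows from a verbatim repetition of the proof of Lemma~\ref{lem:moi-cont}—Bochner integral representation, strong operator convergence of $e^{is_j t H_{jk}}$, Gr\"{u}mm's theorem, and dominated convergence—once H\"{o}lder is applied with exponents summing to $k/n$ instead of $1$.
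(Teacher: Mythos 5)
Your proposal is correct and follows essentially the same route as the paper: the paper's proof is largely by citation (to Peller and to \cite[Theorem~5.3.5]{ST19}, supplemented by Lemma~\ref{lem:moi-cont} for the trace-norm statement), and the telescoping argument via Lemma~\ref{lem:moi_pert_form} together with the Schatten-norm continuity of multilinear operator integrals that you spell out is precisely the mechanism underlying those cited proofs. Your observation that the case $k<n$ needs Lemma~\ref{lem:moi-cont} extended from $\Scal^1$ to $\Scal^{n/k}$ (followed by the embedding $\Scal^{n/k}\hookrightarrow\bh$) is the right and routine modification.
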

	
	\begin{proof}
		The $k$-times differentiability of $
		\R \ni t \mapsto f(H + tV)$
		in the operator norm was established in \cite{Pe06} and summarized in \cite[Theorem~5.3.5]{ST19}.
		The differentiability of
		\[
		s \mapsto
		\at{\frac{d^{n-1}}{dt^{n-1}}}{f(H + tV)}{t = s}
		\]
		in the norm $\|\cdot\|_1$ can be proved along the lines of the proof of \cite[Theorem~5.3.5]{ST19} with help of Lemma~\ref{lem:moi-cont}.
	\end{proof}

 The operator function $t\mapsto f(H+tV)$ is known to be differentiable for a larger class of functions than the one considered in Theorem \ref{thm:differetiation}. However, the property $f\in C^1(\R)$ is insufficient for the differentiability of $t\mapsto f(H+tV)$ in the operator norm even if $H$ is bounded. A detailed discussion of the differentiability of operator functions can be found in \cite[Section 5.3]{ST19}.
	
	\begin{lma}\label{lem:pert_form}
		Let $n \in \N$, $n \geq 2$, and $f \in C_c^{n+1}(\R)$. Let $H, V$ be self-adjoint operators in $\Hcal$ with $V \in \Scal^n$. Then,
		\begin{align}\label{eq:per0}
			\Rcal_n(f, H, V)
			&= T_{f^{[n-1]}}^{H, H+V, (H)^{n-2}}((V)^{n-1})
			- T_{f^{[n-1]}}^{(H)^n}((V)^{n-1}),
		\end{align}
		where $\Rcal_n(f, H, V)$ is given by~\eqref{eq:rem}.
	\end{lma}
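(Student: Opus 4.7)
The plan is to establish \eqref{eq:per0} by induction on $n \ge 2$, combining Theorem \ref{thm:differetiation} (which expresses each Taylor coefficient of $t \mapsto f(H+tV)$ at $t=0$ as an MOI at $(H)^{k+1}$) with Lemma \ref{lem:moi_pert_form} (which rewrites the difference of two MOIs that differ in a single top slot as one MOI of the next order).

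For the base case $n = 2$, Theorem \ref{thm:differetiation} gives $\frac{d}{dt}f(H+tV)\big|_{t=0} = T^{H,H}_{f^{[1]}}(V)$, while the standard first-order Daletskii--Krein representation (derivable directly from Definition \ref{def:moi} on invariant finite-dimensional spectral blocks and extended to the general case by a resolvent-strong approximation of $H$, as in the proof of Lemma \ref{lem:moi-cont}) yields $f(H+V) - f(H) = T^{H, H+V}_{f^{[1]}}(V)$. Substituting both into $\Rcal_2(f, H, V) = f(H+V) - f(H) - \frac{d}{dt}f(H+tV)\big|_{t=0}$ then produces \eqref{eq:per0} at $n=2$. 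For the inductive step I will use the telescoping relation $\Rcal_n(f,H,V) = \Rcal_{n-1}(f,H,V) - \frac{1}{(n-1)!}\frac{d^{n-1}}{dt^{n-1}}f(H+tV)\big|_{t=0}$, apply the inductive hypothesis to the first summand, and invoke Theorem \ref{thm:differetiation} on the second, obtaining
\begin{align*}
\Rcal_n(f,H,V) = T^{H, H+V, (H)^{n-3}}_{f^{[n-2]}}((V)^{n-2}) - T^{(H)^{n-1}}_{f^{[n-2]}}((V)^{n-2}) - T^{(H)^n}_{f^{[n-1]}}((V)^{n-1}).
\end{align*}
The two order-$(n-2)$ MOIs on the right differ only in the second top slot ($H+V$ versus $H$), so Lemma \ref{lem:moi_pert_form} applied at $i=2$ (with the lemma's $H$ being our $H+V$, its $K$ being $H$, the remaining upper operators all equal to $H$, and the remaining perturbations all equal to $V$) collapses their difference into $T^{H, H+V, H, (H)^{n-3}}_{f^{[n-1]}}((V)^{n-1}) = T^{H, H+V, (H)^{n-2}}_{f^{[n-1]}}((V)^{n-1})$, closing the induction.

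The main delicate point is purely combinatorial: one must verify carefully that the two order-$(n-2)$ MOIs produced by the inductive decomposition differ in exactly one slot, and that inserting $H-K = V$ at that slot alongside the existing $(V)^{n-2}$ produces exactly $n-1$ copies of $V$ in the perturbation tuple while reassembling the top tuple as $H, H+V, (H)^{n-2}$. No analytic input beyond Theorem \ref{thm:differetiation} and Lemma \ref{lem:moi_pert_form} is needed once the Daletskii--Krein base case is in hand, so the argument is essentially algebraic.
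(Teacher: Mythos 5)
Your proof is correct and takes essentially the same route as the paper: the paper also subtracts the Taylor coefficients via Theorem \ref{thm:differetiation}, rewrites $f(H+V)-f(H)$ as $T^{H,H+V}_{f^{[1]}}(V)$ by the Birman--Solomyak formula, and then applies Lemma \ref{lem:moi_pert_form} ``repeatedly'' -- which is precisely the induction you make explicit, since at each stage the two MOIs being combined differ only in the second slot ($H+V$ versus $H$) and collapse to the next-order MOI with an extra $V$ inserted. Your version merely organizes that repetition as a formal induction on $n$; nothing is gained or lost analytically, and the identification of $i=2$ as the slot where the superscripts differ is the same bookkeeping the paper leaves implicit behind the word ``repeatedly.''
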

	
	\begin{proof}
		By Theorem \ref{thm:differetiation}, we have
		\begin{align}\label{eq:0}
			\Rcal_n(f, H, V)
			&= f(H+V) - f(H)
			- \sum_{k=1}^{n-1}
			T_{f^{[k]}}^{(H)^{k+1}}((V)^k).
		\end{align}
		By the well-known Birman--Solomyak perturbation formula \cite[Consequence of Theorem 4.5]{BiSo73}, we have
		\begin{align}\label{eq:1}
			f(H+V) - f(H)
			&= T_{f^{[1]}}^{H+V, H}(V)
			= T_{f^{[1]}}^{H, H+V}(V).
		\end{align}
		Applying~\eqref{eq:1} and Lemma~\ref{lem:moi_pert_form} in \eqref{eq:0} repeatedly implies~\eqref{eq:per0}.
	\end{proof}

	\section{ Positivity preserving property of operator derivatives}\label{sec:finite}
	
	In this section, we establish the positivity preserving property of the trace of a higher order operator derivative in the infinite-dimensional setting.
	
The starting point for our results is the following positivity preserving property of the trace of a matrix function.
	
\begin{thm}{\rm(\!\!\cite[Corollary 1.2]{Otte})}\label{thm:otte}
Assume that $\dim(\Hcal) < \infty$ and let $H, V$ be self-adjoint operators on $\Hcal$.
Let $n\in\N$, $\Omega\subset\R$ be an interval, $f \in C^n(G_\Omega)$, where $G_\Omega$ is given by \eqref{def:m-M}, and consider the function $\phi:\Omega\to\R$ defined by \[\phi(t)=\Tr\!\left(f(H + tV)\right).\]
If $f^{(n)} \ge 0$ on $ G_\Omega$, then the following assertions hold.
\begin{enumerate}[(i)]
\item If $n$ is even, then $\phi^{(n)} \ge 0$.
			
\item If $n$ is odd, then $\phi^{(n)} \ge 0$ (respectively, $\phi^{(n)} \le 0$) provided $V \ge 0$ (respectively, $V \le 0$).
\end{enumerate}
\end{thm}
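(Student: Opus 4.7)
The plan is to use Theorem~\ref{thm:differetiation} to write $\phi^{(n)}(t)$ as the trace of a multilinear operator integral, expand it in the eigenbasis of $H+tV$ available in finite dimensions, and reduce the sign question to a spanning family of test functions for the cone $\{f^{(n)}\ge 0\}$.

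In finite dimensions, after reducing via a perturbation-plus-continuity argument to the case of simple spectrum $H+tV=\sum_j\mu_j P_j$ with rank-one projections $P_j=|e_j\rangle\la e_j|$, Theorem~\ref{thm:differetiation} together with cyclicity of the trace yields
\[
\tfrac{1}{n!}\phi^{(n)}(t)
=\sum_{j_0,\ldots,j_{n-1}} f^{[n]}(\mu_{j_0},\mu_{j_1},\ldots,\mu_{j_{n-1}},\mu_{j_0})\, V_{j_0 j_1} V_{j_1 j_2}\cdots V_{j_{n-1} j_0},
\]
where $V_{jk}=\la e_j,V e_k\ra$. Applying the Hermite--Genocchi identity $f^{[n]}(\lambda_0,\ldots,\lambda_n)=\int_{\Delta_n} f^{(n)}(\sum_k s_k\lambda_k)\,d\sigma(s)$ rewrites each divided difference as an integral of $f^{(n)}$ at convex combinations of eigenvalues lying in $G_\Omega$, where the hypothesis $f^{(n)}\ge 0$ applies.

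Since $f\mapsto\phi^{(n)}(t)$ is continuous in $\|f^{(n)}\|_\infty$ over $G_\Omega$ (by Corollary~\ref{thm:tr-est}) and vanishes on polynomials of degree less than $n$, it suffices to test on a spanning family. The simplest is $\{(x-s)^n/n!\}_{s\in\R}$, for which $f^{(n)}\equiv 1$: a direct expansion of $\Tr((H+tV-s)^n)$ as a polynomial of degree $n$ in $t$ with leading coefficient $\Tr(V^n)$ gives $\phi^{(n)}(t)=\Tr(V^n)$. For even $n=2m$ this equals $\|V^m\|_2^2\ge 0$, and for odd $n$ with $V\ge 0$ (resp.\ $V\le 0$) it equals $\|V^{n/2}\|_2^2\ge 0$ (resp.\ $-\|(-V)^{n/2}\|_2^2\le 0$).

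The main obstacle is that $\{(x-s)^n\}_s$ spans only functions with constant $n$-th derivative; bridging to the full cone $\{f^{(n)}\ge 0\text{ on }G_\Omega\}$ requires more. My strategy would be to approximate $f^{(n)}$ uniformly on $G_\Omega$ by Bernstein-type polynomials (which preserve nonnegativity), $n$-fold antidifferentiate to obtain polynomial approximants $p_k\to f$ in $C^n(G_\Omega)$ with $p_k^{(n)}\ge 0$, and then carry out the sign analysis for polynomials $p$ with $p^{(n)}\ge 0$ on $G_\Omega=[a,b]$ using the Lukács decomposition $p^{(n)}(x)=q_0(x)^2+(x-a)(b-x)q_1(x)^2$. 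Combined with a symmetrization of the cyclic sum over the symmetric group acting on the indices $(j_0,\ldots,j_{n-1})$---permissible by the full symmetry of $f^{[n]}$---this should allow recognition of the result as a sum of squares when $n$ is even, or as a pairing of $V$ with a positive operator when $n$ is odd and $V$ is sign-definite. This sign analysis of the cyclic trace is the technical heart of the proof.
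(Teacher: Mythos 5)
The statement you are proving is not proved in the paper at all: it is quoted verbatim from \cite[Corollary 1.2]{Otte} (Hein\"avaara's tracial joint spectral measures paper), which the authors describe as the deep external input of their method. So the only question is whether your sketch constitutes an independent proof, and it does not. What you actually carry out --- the cyclic-sum formula $\tfrac{1}{n!}\phi^{(n)}(t)=\sum_{j_0,\dots,j_{n-1}} f^{[n]}(\mu_{j_0},\dots,\mu_{j_{n-1}},\mu_{j_0})V_{j_0j_1}\cdots V_{j_{n-1}j_0}$, the Hermite--Genocchi representation, and the evaluation for $f(x)=(x-s)^n$ --- only verifies the claim for functions with \emph{constant} $n$-th derivative, i.e.\ polynomials of degree at most $n$, where $\phi^{(n)}\equiv\Tr(V^n)$; that is the trivial case. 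Your reductions (simple spectrum by perturbation, and approximation of $f$ by polynomials $p_k$ with $p_k^{(n)}\ge 0$ using the bound in $\|f^{(n)}\|_\infty$) are legitimate, but they only transfer the problem to polynomials with nonnegative $n$-th derivative on $G_\Omega$ (equivalently, via Taylor's formula with integral remainder, to the truncated powers $x\mapsto(x-s)_+^{n-1}$). The sign analysis for \emph{that} class is precisely the content of the theorem, and in your write-up it is covered by a single unproved assertion: that the Luk\'acs decomposition of $p^{(n)}$ combined with symmetrization of the cyclic sum ``should allow recognition of the result as a sum of squares.'' Nothing in the sketch substantiates this, so the proof has a genuine gap at exactly the step where the difficulty lives.

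There is also strong evidence that this final step cannot be completed by elementary symmetrization. The positivity of the cyclic sums is not termwise: for $n\ge 3$ the products $V_{j_0j_1}V_{j_1j_2}\cdots V_{j_{n-1}j_0}$ are complex and not sign-definite (even for $V\ge 0$), and substituting a sum-of-squares decomposition of $f^{(n)}$ into the Hermite--Genocchi integral does not decouple the simplex variables from the index cycle, so no sum-of-squares structure for the full expression emerges. A concrete warning sign is recalled in the paper itself after Proposition \ref{cor:BMV-0}: the special case $f(x)=x^p$ with $H,V\ge 0$ of Theorem \ref{thm:otte}, through the equivalence of \cite{LiSe04}, is equivalent to the finite-dimensional BMV conjecture \eqref{lir}, whose only known proofs are Stahl's \cite{Stahl} and Hein\"avaara's \cite{Otte}; in that setting algebraic and sum-of-squares approaches were pursued for decades and are known to fail, since the relevant operator word sums are in general not positive semidefinite --- only their traces are nonnegative, and establishing that is hard analysis. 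So a proof attempt whose decisive step is ``symmetrization should reveal a sum of squares'' is, in effect, claiming an elementary proof of a statement at least as strong as BMV; until that step is actually carried out, the argument cannot be accepted, and the correct attribution is to cite \cite{Otte} as the paper does.
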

	
	It is well known that the function $\phi$ defined in Theorem \ref{thm:otte} is differentiable $n$ times (see, e.g., \cite[Theorem 5.3.2]{ST19}). If $t\in\Omega$ is an endpoint of the interval $\Omega$, then the derivative at $t$ is defined using the one-sided limit.

	It follows from Theorem~\ref{thm:differetiation} that the derivative of $\phi$ can also be computed by the formula
	\begin{align}
		\label{trder}
		\phi^{(n)}(s)
		&= \Tr\!\left(\frac{d^n}{dt^n} f(H + tV)\Big|_{t = s}\right).
	\end{align}
	While the trace of $f(H+tV)$ is generally undefined for infinite-dimensional operators $H$ and $V$, the trace of $\frac{d^n}{dt^n}f(H + tV)\big|_{t = s}$ is defined for a broad class of functions $f$ when $V\in\Scal^n$ (see Theorem~\ref{thm:differetiation}) and the right-hand side of \eqref{trder} becomes a natural replacement of $\phi^{(n)}$ in the infinite-dimensional case. The latter observation explains the modification of the statement of Theorem~\ref{thm:otte} when the result is extended to the setting of an infinite-dimensional Hilbert space $\Hcal$ in Theorem~\ref{thm:Ext-otte}  and Theorem~\ref{thm:Ext-otte-unbdd}.
	
We will need the following approximation result.
	
\begin{lma}\label{lem:finite_proj}
Let $n \in \N$. Let $H, V \in \bh$ be self-adjoint operators such that $V \in \Scal^n$. Then, there exists a sequence $\{P_k\}$ of finite-rank projections, strongly convergent to the identity, such that
\begin{enumerate}[(i)]
\item $\|P_k^{\perp} V\|_n = \|V P_k^{\perp}\|_n \to 0$ as $k \to \infty$,
\item $P_k H P_k \to H$ in the strong operator topology as $k \to \infty$.
\end{enumerate}
\end{lma}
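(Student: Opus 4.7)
The plan is to build $P_k$ as a join of two types of finite-rank projections: one ensuring the Schatten smallness condition (i) and one guaranteeing strong convergence to the identity.

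First, since $V \in \Scal^n$ is compact and self-adjoint, the spectral theorem gives a sequence $E_k := \chi_{[1/k,\infty)}(|V|)$ of spectral projections of $|V|$, each of finite rank. Because $V = V^*$, the projection $E_k$ commutes with $V$, and the spectral representation of $V$ directly yields
\begin{align*}
\|V E_k^{\perp}\|_n^n = \sum_{j:\, |\lambda_j(V)|<1/k} |\lambda_j(V)|^n \longrightarrow 0
\end{align*}
as $k\to\infty$, since the singular value sequence of $V$ is in $\ell^n$. Separately, fix any orthonormal basis $\{f_j\}_{j\in\N}$ of $\Hcal$ and let $Q_k$ be the orthogonal projection onto $\operatorname{span}\{f_1,\ldots,f_k\}$; then $Q_k \to I$ in the strong operator topology.

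I would then define $P_k$ to be the orthogonal projection onto $\operatorname{range}(E_k) + \operatorname{range}(Q_k)$, which is finite-dimensional, hence $P_k$ is of finite rank. Since $P_k \geq Q_k$, one has $\|P_k^{\perp}\xi\| \leq \|Q_k^{\perp}\xi\| \to 0$ for every $\xi\in\Hcal$, so $P_k \to I$ strongly. For (i), note that $P_k \geq E_k$ implies $P_k^{\perp} \leq E_k^{\perp}$, whence $E_k^{\perp}P_k^{\perp} = P_k^{\perp}$. Using the commutation $V E_k^{\perp} = E_k^{\perp} V$, I get
\begin{align*}
V P_k^{\perp} = V E_k^{\perp} P_k^{\perp},
\end{align*}
so $\|V P_k^{\perp}\|_n \leq \|V E_k^{\perp}\|_n \|P_k^{\perp}\| \to 0$. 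The equality $\|P_k^{\perp} V\|_n = \|V P_k^{\perp}\|_n$ is immediate from self-adjointness of $V$ and $P_k^{\perp}$, since $(V P_k^{\perp})^* = P_k^{\perp} V$ and the Schatten norm is invariant under taking adjoints.

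For (ii), a straightforward triangle inequality suffices: for any $\xi\in\Hcal$,
\begin{align*}
\|P_k H P_k \xi - H\xi\| \leq \|P_k H P_k^{\perp}\xi\| + \|P_k^{\perp} H \xi\| \leq \|H\|\,\|P_k^{\perp}\xi\| + \|P_k^{\perp}(H\xi)\|,
\end{align*}
and both summands tend to $0$ by the strong convergence $P_k \to I$. There is no substantial obstacle; the only conceptual point is the simultaneous enforcement of both properties, which is addressed by the join construction, and the exploitation of the fact that the spectral projections of a self-adjoint operator commute with the operator.
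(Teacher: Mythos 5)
Your proof is correct, but it takes a genuinely different route from the paper. The paper invokes the Weyl--von Neumann--Kuroda theorem to write $H$ as a diagonal operator $\sum_i \lambda_i \langle \cdot, e_i\rangle e_i$ plus a perturbation in $\Scal^{n+1}$, and then takes $P_k$ to be the (increasing) projections onto $\operatorname{span}\{e_1,\dots,e_k\}$, deducing (i) from $P_k \uparrow I$ and (ii) from the strong convergence of the truncated diagonal part. You instead build $P_k$ as the projection onto $\operatorname{range}(E_k)+\operatorname{range}(Q_k)$, where $E_k=\chi_{[1/k,\infty)}(|V|)$ handles the Schatten tail of $V$ explicitly (using that $E_k$ commutes with $V$ and that the singular values of $V$ lie in $\ell^n$) and $Q_k$ supplies strong convergence to the identity; (ii) then follows from boundedness of $H$ by the elementary estimate you give. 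Your argument is more elementary, avoiding the Weyl--von Neumann--Kuroda theorem entirely, and it is fully adequate for the way the lemma is used later (only strong convergence of $P_k$, $\|V_k-V\|_n\to 0$, and $P_kHP_k\to H$ strongly are needed; monotonicity of $P_k$ is never used). One further simplification worth noting: since $H$ is bounded, both (i) and (ii) in fact hold for an arbitrary sequence of finite-rank projections converging strongly to $I$ --- (i) by approximating $V$ in $\Scal^n$ by a finite-rank operator --- so even the join with $E_k$ is a convenience rather than a necessity; its advantage, as in your write-up, is that it makes (i) completely explicit without appealing to density of finite-rank operators in $\Scal^n$. The paper's construction, at the cost of heavier machinery, produces an increasing sequence adapted to a diagonalization of $H$ modulo a Schatten-class error, which is a natural template if one wanted analogous statements with weaker assumptions on $H$.
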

	
	\begin{proof}
By the Weyl–von Neumann–Kuroda theorem (see, e.g., \cite[Chapter 7, Theorem~2.3]{Kato}), there exist a bounded sequence of real numbers
		$\{\lambda_i\}_{i \in \N}$, an orthonormal basis $\{e_i\}_{i \in \N}$ for $\Hcal$,
		and an operator $X \in \Scal^{n+1}$ such that
		\begin{align*}
			H&=\sum_{i=1}^{\infty} \lambda_i \langle \cdot, e_i \rangle e_i + X.
		\end{align*}
		Let $P_k$ denote the orthogonal projection onto $\operatorname{span}\{e_i\}_{i=1}^k$. Then, $P_k \uparrow I$ (i.e., $P_k$ increases strongly to the identity), implying (i). Since
		\begin{align*}
			\sum_{i=1}^{k} \lambda_i \langle \cdot, e_i \rangle e_i
			\longrightarrow\sum_{i=1}^{\infty} \lambda_i \langle \cdot, e_i \rangle e_i
			\quad \text{strongly as } k \to \infty,
		\end{align*}
(ii) also follows.
	\end{proof}

	\begin{thm}\label{thm:Ext-otte}
		Let $n\in \N$, let $H\in\bh$ and $V \in \Scal^n$ be self-adjoint operators.
		Let $\Omega\subset\R$ be an interval, $f \in C^{n+1}(G_\Omega)$, where $G_\Omega$ is given by \eqref{def:m-M}, and consider the function $\psi:\Omega\to\R$ defined by
\begin{align*}%\label{psif}
\psi(s)=\Tr\!\left(\frac{d^n}{dt^n} f(H + tV)\Big|_{t=s}\right).
\end{align*}
		If $f^{(n)} \ge 0$ on $ G_\Omega$, then the following assertions hold.
		\begin{enumerate}[(i)]
			\item If $n$ is even, then $\psi \ge 0$.
			
			\item If $n$ is odd, then $\psi \ge 0$ (respectively, $\psi \le 0$) provided $V \ge 0$ (respectively, $V \le 0$).
		\end{enumerate}
		
	\end{thm}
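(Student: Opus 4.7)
The plan is to reduce to the finite-dimensional Theorem~\ref{thm:otte} via two coordinated approximations: a finite-rank compression of $(H,V)$, and a compactly supported cutoff of $f$ that activates the multilinear operator integral continuity tools. Fix $s\in\Omega$ and a bounded subinterval $\Omega_s\subset\Omega$ containing $s$; since $H,V\in\bh$, $G_{\Omega_s}$ is contained in some bounded closed interval $[a,b]$. Using Lemma~\ref{lem:finite_proj}, choose finite-rank projections $\{P_k\}$ with $P_kHP_k\to H$ strongly and $\|P_k^{\perp} V\|_n\to 0$, and put $H_k=P_kHP_k$, $V_k=P_kVP_k$. Then $V_k\to V$ in $\Scal^n$, the operators $H_k+sV_k$ are uniformly bounded and converge strongly, hence resolvent strongly, to $H+sV$, and the sign assumptions on $V$ transfer to $V_k$. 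A Rayleigh quotient argument shows that $\sigma(H_k+tV_k|_{P_k\Hcal})\subset[\min\sigma(H+tV),\max\sigma(H+tV)]\subset G_{\Omega_s}$ for $t\in\Omega_s$, while $H_k+tV_k$ vanishes on $P_k^{\perp}\Hcal$.

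Next, extend $f|_{[a,b]}$ to some $\tilde f\in C_c^{n+1}(\R)\subset\Wcal_n(\R)$ coinciding with $f$ on $[a,b]$ (by Whitney extension followed by multiplication with a cutoff). Applying Theorem~\ref{thm:otte} on the interval $\Omega_s$ to the compressions $A_k:=H_k|_{P_k\Hcal}$ and $B_k:=V_k|_{P_k\Hcal}$ with the function $\tilde f$ yields $\phi_k^{(n)}(s)\ge 0$ with the correct sign, where $\phi_k(t):=\Tr(\tilde f(A_k+tB_k))$ and the trace is taken on the finite-dimensional space $P_k\Hcal$; the hypothesis $\tilde f^{(n)}=f^{(n)}\ge 0$ is met on $[a,b]$ which contains all the compressed spectra. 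Because $V_k$ annihilates $P_k^{\perp}\Hcal$, the $n$th derivative of $\tilde f(H_k+tV_k)$ vanishes on that subspace, so $\phi_k^{(n)}(s)$ equals $\psi_k(s):=\Tr(\frac{d^n}{dt^n}\tilde f(H_k+tV_k)|_{t=s})$ computed on the full space. By Theorem~\ref{thm:differetiation} one writes $\psi_k(s)=n!\,\Tr(T_{\tilde f^{[n]}}^{(H_k+sV_k)^{n+1}}((V_k)^n))$, and similarly for $\psi(s)$; since the multilinear operator integral depends on its symbol only through its restriction to the spectra of the base operators, and $\sigma(H+sV)\subset[a,b]$ where $\tilde f=f$, replacing $\tilde f$ by $f$ leaves $\psi(s)$ unchanged and matches the statement of the theorem.

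Finally, Lemma~\ref{lem:moi-cont} applied with $H_{ik}=H_k+sV_k$ and $V_k\to V$ in $\Scal^n$ gives $\psi_k(s)\to\psi(s)$, and the desired sign passes to the limit. The main technical obstacle is the coordination of the two approximations: the cutoff $\tilde f$ must simultaneously lie in $\Wcal_n(\R)$ so that Lemma~\ref{lem:moi-cont} applies, and have $\tilde f^{(n)}$ of the correct sign on every spectrum of $A_k+tB_k$ for $t\in\Omega_s$. Restricting to the bounded subinterval $\Omega_s$ ensures that all these spectra lie in the common compact interval $[a,b]$, on which $\tilde f$ agrees with $f$ by construction; a separate minor check is that for bounded self-adjoint operators, strong convergence with uniform bound implies resolvent strong convergence, which is required to invoke Lemma~\ref{lem:moi-cont}.
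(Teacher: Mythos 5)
Your proposal is correct and follows essentially the same route as the paper: finite-rank compression via Lemma~\ref{lem:finite_proj}, a compactly supported extension of $f$ agreeing with it on $G_{\Omega_s}$ so that Theorem~\ref{thm:differetiation}, Theorem~\ref{thm:otte} and Lemma~\ref{lem:moi-cont} apply, and passage to the limit of the nonnegative finite-dimensional trace derivatives. The only cosmetic point is that $f$ is defined only on $G_\Omega$, so the interval on which you extend should be taken as $[a,b]=G_{\Omega_s}$ itself (a closed bounded subinterval of $G_\Omega$), exactly as in the paper.
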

	
	\begin{proof}
Firstly we justify that $\psi$ is well defined. For every $s\in\Omega$, let $\widetilde\Omega_s\subset\Omega$ be a bounded subinterval containing $s$. Then, $G_{\widetilde\Omega_s}\subset G_\Omega$ is a bounded closed interval and there exists $g\in C_c^{n+1}(\R)$ such that $g|_{ G_{\widetilde\Omega_s}}=f|_{ G_{\widetilde\Omega_s}}$. Applying Theorem \ref{thm:differetiation} and the equality $g(H+tV)=f(H+tV)$ for every $t\in\widetilde\Omega_s$ confirms that $\frac{d^n}{dt^n}f(H+tV)\big|_{t=s}=\frac{d^n}{dt^n}g(H+tV)\big|_{t=s}$ exists and belongs to $\Scal^1$. Moreover, by Theorem~\ref{thm:differetiation}, for every $t \in \widetilde\Omega_s$, we have
	\[\psi(t)=n!\,\Tr\!\left(T_{g^{[n]}}^{(H+tV)^{n+1}}\!\big((V)^n\big)\right).\]
	
	(i) Let $\{P_k\}$ be a sequence of finite-rank projections satisfying Lemma~\ref{lem:finite_proj}. Then, the operators
\[H_k = P_k H P_k, \quad V_k = P_k V P_k\]
act on the finite-dimensional Hilbert space $P_k(\Hcal)$. For every $t\in \widetilde\Omega_s$,
$$\overline{\rm{c.v.h}\,\left( \sigma(P_k(H+tV)\big|_{P_k(\Hcal)})\right)}\subseteq G_{\widetilde\Omega_s}.$$
Moreover, for all $t, t'\in \widetilde{\Omega}_s$,
\begin{align*}
&\Tr\,\big(g(H_k+tV_k)-g(H_k+t'V_k)\big)\\
=& \Tr\,\Big( \big( g(P_k(H+tV)\big|_{P_k(\Hcal)})\oplus g(0)P_k^\perp \big) - \big( g(P_k (H+t'V)\big|_{P_k(\Hcal)})\oplus g(0)P_k^\perp \big) \Big)\\
=&\Tr\,\Big( f(P_k(H+tV)\big|_{P_k(\Hcal)}) - f(P_k (H+t'V)\big|_{P_k(\Hcal)}) \Big).
\end{align*}
	
Define $\phi_k : \widetilde\Omega_s \to \C$ by
\[\phi_k(t)=\Tr\!\big( f(P_k(H+tV)\big|_{P_k(\Hcal)})\big).\]
Fix $s_0\in\widetilde{\Omega}_s$.  By Theorem~\ref{thm:differetiation} and the above observations,
\begin{align*}
\phi_k^{(n)}(s_0)
%&=\frac{d^n}{dt^n}\Tr\!\left(f(P_k(H+tV)\big|_{P_k\Hcal})\Big|_{t=s}\right)\\
&=\frac{d^n}{dt^n}\Tr\!\left(f\big(P_k(H+tV)\big|_{P_k(\Hcal)}\big)-f\big(P_k(H+s_0V) \big|_{P_k(\Hcal)}\big)\right)\Big|_{t=s_0}\\
&=\frac{d^n}{dt^n}\Tr\!\big(g(H_k+tV_k)-g(H_k+s_0V_k)\big)\Big|_{t=s_0}\\
&=\Tr\!\left(\frac{d^n}{dt^n}\, g(H_k+tV_k)\Big|_{t=s_0}\right)\\
&=n!\,\Tr\!\left(T_{g^{[n]}}^{(H_k + s_0V_k)^{n+1}}\!\big( (V_k)^n \big)\right).
\end{align*}
	By Lemma~\ref{lem:finite_proj}, $\|V_k-V\|_n\to 0$ and $H_k \to H$ strongly and, hence, also in the strong resolvent sense.
	Therefore, for each $s \in \Omega$, Lemma~\ref{lem:moi-cont} implies that $\phi_k^{(n)}(s) \to \psi(s)$. Since $\phi_k^{(n)}(s) \geq 0$ for all $k$ by Theorem~\ref{thm:otte}, we also obtain $\psi(s) \geq 0$ on $\Omega$.
\smallskip
		
(ii) If $V\ge 0$ (respectively, $V\le 0$), then $V_k \geq 0$ (respectively, $V_k\le 0$) for all $k\in\N$. The rest of the proof goes along the lines of the proof of (i).
	\end{proof}

\begin{thm}\label{thm:Ext-otte-unbdd}
Let $m\in\R$, $n\in\N$, $H$ be a self-adjoint operator in $\Hcal$ such that $H\ge mI$, and let $V\in\Scal^n$ satisfy $V\ge 0$. Let $g\in\cap_{j=1}^n\Wcal_j(\R)$ and consider the function $\psi:[0,\infty)\to\R$ defined by
\begin{align*}
\psi(s)=\Tr\!\left(\frac{d^n}{dt^n} g(H + tV)\Big|_{t=s}\right).
\end{align*}
If $g^{(n)}|_{G_{[0,\infty)}}\ge 0$, where $G_{[0,\infty)}$ is defined in \eqref{def:m-M}, then $\psi\ge 0$.
\end{thm}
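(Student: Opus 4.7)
The plan is to reduce to the bounded, smooth case of Theorem \ref{thm:Ext-otte} by approximating $H$ with a spectral truncation and $g$ with a mollification, then to pass the two successive limits via Lemma \ref{lem:moi-cont} and the Fourier integral bound on the multilinear operator integral that appears in its proof.

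The case $V=0$ is trivial, so assume $V\ne 0$ and put $m_{*}:=\inf\sigma(H)\ge m$. Since $V\ge 0$ is a nonzero compact operator it has a positive eigenvalue, whence $\sup\sigma(H+tV)\to\infty$ as $t\to\infty$, and combined with $H+tV\ge m_{*}I$ this gives $G_{[0,\infty)}=[m_{*},\infty)$. Define
\[H_{j}:=HE_{H}([m,m+j])+(m+j)E_{H}((m+j,\infty)).\]
Then $H_{j}$ is bounded, $m_{*}I\le H_{j}\le (m+j)I$, $H_{j}\to H$ in the strong resolvent sense by dominated convergence applied to $(H_{j}-i)^{-1}$ in the spectral measure of $H$, and $\sigma(H_{j})\subseteq[m_{*},\infty)$. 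Consequently,
\[G_{[0,\infty)}^{(j)}:=\overline{\mathrm{c.v.h.}\bigl(\cup_{t\ge 0}\sigma(H_{j}+tV)\bigr)}\subseteq[m_{*},\infty)=G_{[0,\infty)},\]
so $g^{(n)}\ge 0$ on $G_{[0,\infty)}^{(j)}$. To upgrade $g$ to a $C^{n+1}$ function, fix a nonnegative $\rho_{\epsilon}\in C_{c}^{\infty}(\R)$ with $\mathrm{supp}(\rho_{\epsilon})\subseteq[-\epsilon,0]$, $\int\rho_{\epsilon}=1$, and set $g_{\epsilon}:=g*\rho_{\epsilon}\in C^{\infty}(\R)$. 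Then $g_{\epsilon}\in\bigcap_{k=1}^{n}\Wcal_{k}(\R)$, and for $x\ge m_{*}$,
\[g_{\epsilon}^{(n)}(x)=\int_{-\epsilon}^{0}g^{(n)}(x-y)\rho_{\epsilon}(y)\,dy\ge 0,\]
since $x-y\in[m_{*},\infty)=G_{[0,\infty)}$; moreover $\widehat{g_{\epsilon}^{(n)}}=\widehat{g^{(n)}}\,\widehat{\rho_{\epsilon}}\to\widehat{g^{(n)}}$ in $L^{1}(\R)$ by dominated convergence.

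Applying Theorem \ref{thm:Ext-otte} to $(H_{j},V,g_{\epsilon})$ with $\Omega=[0,\infty)$ yields
\[\psi_{j,\epsilon}(s):=\Tr\!\left(\frac{d^{n}}{dt^{n}}g_{\epsilon}(H_{j}+tV)\Big|_{t=s}\right)\ge 0\]
for every $s\ge 0$. By Theorem \ref{thm:differetiation}, $\psi_{j,\epsilon}(s)=n!\,\Tr\bigl(T_{g_{\epsilon}^{[n]}}^{(H_{j}+sV)^{n+1}}((V)^{n})\bigr)$. Since $H_{j}+sV\to H+sV$ in the strong resolvent sense and $V$ is fixed, Lemma \ref{lem:moi-cont} gives $\psi_{j,\epsilon}(s)\to\psi_{\epsilon}(s):=n!\,\Tr\bigl(T_{g_{\epsilon}^{[n]}}^{(H+sV)^{n+1}}((V)^{n})\bigr)$ as $j\to\infty$, so $\psi_{\epsilon}(s)\ge 0$. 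The Fourier integral representation of the multilinear operator integral from the proof of Lemma \ref{lem:moi-cont} then produces
\[\bigl\|T_{g_{\epsilon}^{[n]}}^{(H+sV)^{n+1}}((V)^{n})-T_{g^{[n]}}^{(H+sV)^{n+1}}((V)^{n})\bigr\|_{1}\le\tfrac{1}{n!}\|\widehat{g_{\epsilon}^{(n)}}-\widehat{g^{(n)}}\|_{1}\|V\|_{n}^{n},\]
which tends to $0$ as $\epsilon\to 0$, yielding $\psi_{\epsilon}(s)\to\psi(s)$ and hence $\psi(s)\ge 0$.

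The main technical hurdle is arranging the spectral containment $G_{[0,\infty)}^{(j)}\subseteq G_{[0,\infty)}$ while retaining both strong resolvent convergence and the lower bound $m_{*}I$: a naive truncation such as $HE_{H}([m,m+j])$ would introduce $0$ into the approximant's spectrum and invalidate the sign hypothesis on $g^{(n)}$, whereas the shifted truncation above preserves $H_{j}\ge m_{*}I$. The one-sided mollifier is dictated by the same consideration, keeping $g_{\epsilon}^{(n)}$ nonnegative on all of $[m_{*},\infty)$ without having to shrink this interval.
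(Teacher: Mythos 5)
Your argument is correct, and its skeleton is the same as the paper's: truncate $H$ to a bounded operator, apply Theorem \ref{thm:Ext-otte}, and pass to the limit in trace norm via Lemma \ref{lem:moi-cont}. The implementation, however, differs in two genuine ways. The paper compresses both $H$ and $V$ by $E_p=E_H((-p,p))$ and works on $E_p(\Hcal)$, which then requires the extra step of invoking \cite[Lemma 2.3]{LeSk20} to replace the truncated base operator $H_p$ by $H$ inside the multilinear operator integral before Lemma \ref{lem:moi-cont} applies; you instead clamp $H$ from above (your $H_j$ is $\min(H,m+j)$ in the functional-calculus sense) and leave $V$ untouched, so Lemma \ref{lem:moi-cont} applies directly with fixed $V$, at the price of having to identify $G_{[0,\infty)}=[m_*,\infty)$ with $m_*=\inf\sigma(H)$ (your treatment of this, including the trivial case $V=0$, is fine; note only that the asserted bound $H_j\ge m_*I$ fails for $j<m_*-m$, a harmless slip since only large $j$ matter). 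Second, you add a one-sided mollification $g_\epsilon=g*\rho_\epsilon$ to meet the $C^{n+1}$ hypothesis of Theorem \ref{thm:Ext-otte} while keeping $g_\epsilon^{(n)}\ge0$ on $[m_*,\infty)$, and you remove it with the estimate $\bigl\|T_{g_\epsilon^{[n]}}^{(H+sV)^{n+1}}((V)^n)-T_{g^{[n]}}^{(H+sV)^{n+1}}((V)^n)\bigr\|_1\le\frac{1}{n!}\|\widehat{g_\epsilon^{(n)}}-\widehat{g^{(n)}}\|_1\,\|V\|_n^n$ coming from the Fourier representation in the proof of Lemma \ref{lem:moi-cont}; the paper applies Theorem \ref{thm:Ext-otte} to $g|_{G_{[0,\infty)}}$ directly even though $g\in\cap_{j=1}^n\Wcal_j(\R)$ is only guaranteed to be $C^n$, so your extra layer in fact smooths over a regularity mismatch that the paper's proof does not comment on. Both routes yield the same conclusion; yours trades the paper's subspace/compression bookkeeping for the mollification step and the explicit description of $G_{[0,\infty)}$.
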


\begin{proof}
Let $E_p = E_H((-p, p))$, where $E_H$ is the spectral measure of $H$ and $p\in\N$, and define
			\[
			H_p := H E_p, \qquad V_p := E_p V E_p.
			\]
			Then $H_p, V_p$ are self-adjoint operators acting on the Hilbert space $ E_p(\Hcal)$.
			Since $E_p \uparrow I$, it follows that $\|V_p - V\|_1 \to 0$ and $H+tV_p \to H+tV$ resolvent strongly as $p \to \infty$ for each $t\geq 0$.

Consider the sequence of functions
 $$\psi_p(s)=\Tr\Big(\frac{d^n}{dt^n}\, g\big(E_p(H + tV)\big|_{E_p(\Hcal)}\big)\Big|_{t=s}\Big),\quad p\in\N.$$
By the same reasoning as in the proof of Theorem \ref{thm:Ext-otte}(i),
$$\psi_p(s)=\Tr\Big(\frac{d^n}{dt^n} g(H_p + tV_p)\Big|_{t=s}\Big),\quad s\in[0,\infty),\quad p\in\N.$$

By Theorem \ref{thm:differetiation} and \cite[Lemma 2.3]{LeSk20}, for every $s\geq0$ we have
\begin{align}
\label{seceq}
				\frac{d^n}{dt^n} g(H_p + tV_p)\Big|_{t=s}
				&= n! \, T_{g^{[n]}}^{(H_p + sV_p)^{n+1}}\!\big((V_p)^n\big)
				= n! \, T_{g^{[n]}}^{(H + sV_p)^{n+1}}\!\big((V_p)^n\big),
\end{align}
which, by Lemma~\ref{lem:moi-cont}, converges to
\[n! \,T_{g^{[n]}}^{(H + sV)^{n+1}}\!\big((V)^n\big)=\frac{d^n}{dt^n}g(H+tV)\Big|_{t=s}\]
in the $\|\cdot\|_1$-norm. Hence, $\psi_p(s)\rightarrow\psi(s)$ for all $s\geq0$.

Applying Theorem \ref{thm:Ext-otte} to the function $g|_{G_{[0,\infty)}}$ and bounded operators $E_pH|_{E_p(\Hcal)}$ and $E_p V|_{E_p(\Hcal)}$ gives $\psi_p\ge 0$ for every $p\in\N$. Hence, $\psi\ge 0$.
\end{proof}

	\section{Positivity preserving property of spectral shift functions}
	\label{sec_pos_ssf}
	
	In this section, we establish the sign-definiteness of higher order spectral shift functions $\eta_{n,H,V}$ satisfying Theorem~\ref{thm:existancessf}. Since the result is known in the cases $n=1,2$ (see Section \ref{sec:intro}), we prove it only the case $n\ge 3$.

	\begin{thm}\label{thm:positivity-ssf}
		Let $n\in\N$ and let $H, V$ be self-adjoint operators in $\Hcal$ such that $V \in \Scal^n$. Let $\eta_{n, H, V}$ be given by Theorem \ref{thm:existancessf}. Then, the following assertions hold.
		\begin{enumerate}[(i)]
			\item\label{bdd_even_ssf} If $n$ is even, then $\eta_{n, H, V} \geq 0$.
			\item\label{bdd_odd_ssf} If $n$ is odd and $V \ge 0$ (respectively, $V\leq 0$), then $\eta_{n, H, V} \ge 0$ (respectively, $\eta_{n, H, V}\leq 0$).
		\end{enumerate}
	\end{thm}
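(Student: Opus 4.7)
The plan is to combine the integral form of the operator Taylor remainder with Theorem~\ref{thm:Ext-otte} (and its unbounded counterpart Theorem~\ref{thm:Ext-otte-unbdd}), and then to promote the resulting integrated inequality to pointwise sign-definiteness of $\eta_{n,H,V}$ via a density argument using moment-correcting bumps placed outside $\mathrm{supp}(\eta_{n,H,V})$. I would first treat the case $H\in\bh$ and then reduce the general unbounded case to it by spectral truncation.

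For $H\in\bh$ and $f\in C_c^{n+1}(\R)$, Theorem~\ref{thm:differetiation} allows me to write the operator Taylor remainder as a Bochner integral in $\Scal^1$,
\begin{align*}
\Rcal_n(f,H,V)=\int_0^1\frac{(1-t)^{n-1}}{(n-1)!}\,\at{\frac{d^n}{ds^n}}{f(H+sV)}{s=t}\,dt.
\end{align*}
Taking the trace under the integral and applying Theorem~\ref{thm:Ext-otte} to the integrand shows that, whenever $f^{(n)}\ge 0$ on $G_{[0,1]}$, the number $\Tr(\Rcal_n(f,H,V))$ is nonnegative when $n$ is even, and nonnegative (respectively, nonpositive) when $n$ is odd and $V\ge 0$ (respectively, $V\le 0$). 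Coupling with the trace formula of Theorem~\ref{thm:existancessf} yields
\begin{align*}
\int_\R f^{(n)}(\lambda)\,\eta_{n,H,V}(\lambda)\,d\lambda\ \ge 0\quad\text{(or $\le 0$)}
\end{align*}
for every such $f$.

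The density step has to circumvent the obstruction that the $n$-th derivative of any $f\in C_c^{n+1}(\R)$ has vanishing moments of orders $0,\dots,n-1$; in particular, $f^{(n)}\ge 0$ throughout $\R$ forces $f\equiv 0$. Since $\mathrm{supp}(\eta_{n,H,V})\subseteq G_{[0,1]}$ is bounded when $H\in\bh$, its complement leaves room for correction bumps. Given $g\in C_c^\infty(\mathrm{int}\,G_{[0,1]})$ with $g\ge 0$, I would fix smooth bumps $\chi_0,\dots,\chi_{n-1}\in C_c^\infty(\R)$ supported in a common bounded interval disjoint from $G_{[0,1]}$ whose moment matrix $\bigl(\int_\R x^k\chi_j(x)\,dx\bigr)_{j,k=0}^{n-1}$ is invertible, and solve an $n\times n$ linear system for coefficients $c_0,\dots,c_{n-1}\in\R$ so that $h:=g+\sum_j c_j\chi_j$ satisfies $\int_\R x^k h\,dx=0$ for $k=0,\dots,n-1$. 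These moment cancellations ensure that the $n$-fold antiderivative $f$ of $h$, given by the Riemann--Liouville formula $f(x)=\int_{-\infty}^x\frac{(x-t)^{n-1}}{(n-1)!}h(t)\,dt$, is compactly supported and smooth, so $f\in C_c^{n+1}(\R)$ with $f^{(n)}=h$. Since $h|_{G_{[0,1]}}=g\ge 0$ and the $\chi_j$ are supported outside $\mathrm{supp}(\eta_{n,H,V})$,
\begin{align*}
\int_\R g\,\eta_{n,H,V}\,d\lambda=\int_\R f^{(n)}\,\eta_{n,H,V}\,d\lambda=\Tr(\Rcal_n(f,H,V)),
\end{align*}
which carries the sign established in the previous paragraph. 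Varying $g$ over nonnegative bumps concentrated at arbitrary points of $\mathrm{int}\,G_{[0,1]}$ then yields the asserted sign of $\eta_{n,H,V}$ almost everywhere.

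For general self-adjoint $H$, I would reduce to the bounded case by spectral truncation: set $E_p=E_H((-p,p))$, $H_p=HE_p$, $V_p=E_pVE_p$, so that $(H_p,V_p)$ acts on $E_p(\Hcal)$ as bounded self-adjoint operators with $V_p\in\Scal^n$ (preserving the sign constraint on $V$ in the odd case). The bounded case gives $\eta_{n,H_p,V_p}$ with the required sign. Strong resolvent convergence $H_p\to H$ and $\|V_p-V\|_n\to 0$, together with Lemma~\ref{lem:moi-cont} applied to the representation of $\Rcal_n$ furnished by Lemma~\ref{lem:pert_form}, give
\begin{align*}
\int_\R f^{(n)}\eta_{n,H_p,V_p}\,d\lambda=\Tr(\Rcal_n(f,H_p,V_p))\longrightarrow\Tr(\Rcal_n(f,H,V))=\int_\R f^{(n)}\eta_{n,H,V}\,d\lambda
\end{align*}
for every $f\in C_c^{n+1}(\R)$. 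A uniform $L^1$-bound on $\{\eta_{n,H_p,V_p}\}$ (available from Theorem~\ref{thm:existancessf} since $\|V_p\|_n\le\|V\|_n$) combined with a weak-$\ast$ compactness argument in the dual of $C_0(\R)$ then propagates sign-definiteness to the limit. I expect this last reduction to the unbounded case to be the main technical obstacle: when $G_{[0,1]}$ is unbounded one cannot place the moment-correcting bumps ``outside'' globally, so the argument must be carried out at the level of the truncated $G_{[0,1]}^p$ and sign-preservation must be transferred through a careful weak-limit argument based on the uniqueness in Theorem~\ref{thm:existancessf}.
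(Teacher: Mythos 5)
Your bounded-case argument is essentially the paper's: the same Bochner-integral form of the Taylor remainder (the paper cites \cite[Theorem 5.4.3]{ST19}), the trace taken under the integral, Theorem~\ref{thm:Ext-otte} applied to the integrand, and then a test-function argument exploiting that $f^{(n)}\ge 0$ is only needed on $G_{[0,1]}$; your moment-correcting bumps placed outside $G_{[0,1]}$ are just a more explicit version of the paper's step of working with $f\in C^{n+1}([-M,M])$, $f^{(n)}\ge 0$, and both rest on the boundedness of $\operatorname{supp}(\eta_{n,H,V})$ when $H\in\bh$. This part is fine.

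The unbounded case is where your proposal has a genuine gap. First, Lemma~\ref{lem:moi-cont} cannot be ``applied to the representation of $\Rcal_n$ furnished by Lemma~\ref{lem:pert_form}'': the two terms there are multilinear operator integrals of order $n-1$ with $n-1$ arguments from $\Scal^n$, whereas Lemma~\ref{lem:moi-cont} treats order-$n$ integrals with $n$ arguments from $\Scal^n$ (for order $n-1$ it would require $V\in\Scal^{n-1}$); moreover, neither term is individually trace class --- only their difference is --- so you cannot pass to traces term by term at all. The paper resolves exactly this point by telescoping the difference $\Rcal_n(f,H,V)-\Rcal_n(f,H_k,V_k)$ via the perturbation formula of Lemma~\ref{lem:moi_pert_form}, which converts it into order-$n$ integrals, and then applying the trace bound of Corollary~\ref{thm:tr-est}; this yields the quantitative estimate $|\Tr\Rcal_n(f,H,V)-\Tr\Rcal_n(f,H_k,V_k)|\le n\,c_n\|f^{(n)}\|_\infty\|V\|_n^{n-1}\|V-V_k\|_n$, hence $\|\eta_n-\eta_{n,k}\|_{L^1}\to 0$, and the sign passes through an a.e.\ convergent subsequence --- no weak-$\ast$ compactness is needed. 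Second, your weak-$\ast$ route relies on a uniform bound $\sup_p\|\eta_{n,H_p,V_p}\|_{L^1}<\infty$ that is not contained in Theorem~\ref{thm:existancessf} as stated (it is the PSS estimate $\|\eta_n\|_1\le c_n\|V\|_n^n$, which the paper does not quote), and the identification of a weak-$\ast$ limit point --- a priori a finite measure that could have a singular part --- with $\eta_n\,d\lambda$ is not covered by the uniqueness clause of Theorem~\ref{thm:existancessf}, which asserts uniqueness only within $L^1(\R)$; you would need an extra density argument (e.g.\ sup-norm density of $\{f^{(n)}:f\in C_c^{n+1}(\R)\}$ in $C_0(\R)$) to rule out a singular contribution. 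Your convergence claim itself is repairable --- for instance by using the integral representation of $\Rcal_n$ through order-$n$ integrals $T_{f^{[n]}}^{(H+tV)^{n+1}}((V)^n)$, to which Lemma~\ref{lem:moi-cont} does apply --- but as written the limiting step does not go through, and the paper's quantitative $L^1$-convergence argument is both simpler and self-contained.
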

	
	\begin{proof}
 Let $n\ge 3$.

 Step 1: $H$ is bounded.
		
	\eqref{bdd_even_ssf}	Let $n$ be even, and set $M = \|H\| + \|V\|$.
	Let $f \in C^{n+1}([-M,M])$.
	By~\cite[Theorem~5.4.3]{ST19}, we have
	\begin{align}\label{eq:trace_id_0}
		\Tr\!\big(\Rcal_n(f,H,V)\big)
		&= \frac{1}{(n-1)!}
		\int_0^1 (1-s)^{n-1}
		\Tr\!\left(
		\frac{d^n}{dt^n} f(H + tV)\Big|_{t=s}
		\right) ds.
	\end{align}
	Combining \eqref{eq:trace_id_0} with \eqref{traceformula_inv} implies
	\begin{align}\label{eq:trace_id-1}
		\int_{-M}^{M}
		f^{(n)}(\lambda)\, \eta_{n,H,V}(\lambda)\, d\lambda
		&= \frac{1}{(n-1)!}
		\int_0^1 (1-s)^{n-1}
		\Tr\!\left(
		\frac{d^n}{dt^n} f(H + tV)\Big|_{t=s}
		\right) ds.
	\end{align}
	By Theorem~\ref{thm:Ext-otte} and~\eqref{eq:trace_id-1}, it follows that
	\begin{align}
		\int_{-M}^{M}
		f^{(n)}(\lambda)\, \eta_{n,H,V}(\lambda)\, d\lambda
		\ge 0
	\end{align}
	for every $f \in C^{n+1}([-M,M])$ satisfying $f^{(n)} \ge 0$.
	Hence, $\eta_{n,H,V} \ge 0$.
	
	The proof of \eqref{bdd_odd_ssf} is completely analogous and, hence, omitted.
	
	\medskip
	
 Step 2: $H$ is unbounded.
	
		Let $E_k = E_H((-k,k))$, where $E_H$ is the spectral measure of $H$, and define
		\[H_k := H E_k, \quad V_k := E_k V E_k.\]
We have $E_k\uparrow I$ and $\|V_k-V\|_n\rightarrow 0$ as $n\rightarrow\infty$.
		
		Denote $\eta_n := \eta_{n, H, V}$ and $\eta_{n,k} := \eta_{n, H_k, V_k}$.
		Firstly we prove that
		\begin{align}
			\label{L1cv}
			\lim_{k \to \infty} \|\eta_n - \eta_{n,k}\|_{L^1(\R)} = 0.
		\end{align}
		Let $f \in C_c^{n+1}(\R)$.  Applying Lemma~\ref{lem:pert_form} and \eqref{a1} (analogously to the derivation of the second equality in \eqref{seceq}) yields
		\begin{align*}
			&\Rcal_n(f, H, V) - \Rcal_n(f, H_k, V_k)\\
			=& \Big(T_{f^{[n-1]}}^{H, H+V, (H)^{n-2}}((V)^{n-1}) - T_{f^{[n-1]}}^{(H)^n}((V)^{n-1})\Big) \\
			&\hspace*{1.5in}
			- \Big(T_{f^{[n-1]}}^{H_k, H_k+V_k, (H_k)^{n-2}}((V_k)^{n-1}) - T_{f^{[n-1]}}^{(H_k)^n}((V_k)^{n-1})\Big) \\
			=& \Big(T_{f^{[n-1]}}^{H, H+V, (H)^{n-2}}((V)^{n-1}) - T_{f^{[n-1]}}^{H, H+V_k, (H)^{n-2}}((V_k)^{n-1})\Big) \\
			&\hspace*{1.5in}- \Big(T_{f^{[n-1]}}^{(H)^n}((V)^{n-1}) - T_{f^{[n-1]}}^{(H)^n}((V_k)^{n-1})\Big).
		\end{align*}
		Applying Lemma \ref{lem:moi_pert_form} along with telescoping in the above equality and then applying Lemma~\ref{lem:moi_pert_form} one more time yields
	\begin{align}
		\label{eq:rem_diff}
		\nonumber
		&\Rcal_n(f, H, V) - \Rcal_n(f, H_k, V_k)\\
		\nonumber
		=& T_{f^{[n]}}^{H, H+V, H+V_k, (H)^{n-2}}(V, V-V_k, (V)^{n-2}) \\
		\nonumber
		&+\sum_{i=1}^{n-1} \Big(T_{f^{[n-1]}}^{H, H+V_k, (H)^{n-2}}((V_k)^{i-1}, V-V_k, (V)^{n-i-1})- T_{f^{[n-1]}}^{(H)^n}((V_k)^{i-1}, V-V_k, (V)^{n-i-1}) \Big) \\
		=& T_{f^{[n]}}^{H, H+V, H+V_k, (H)^{n-2}}(V, V-V_k, (V)^{n-2}) + \sum_{i=1}^{n-1} T_{f^{[n]}}^{H, H+V_k, (H)^{n-1}}((V_k)^i, V-V_k, (V)^{n-i-1}).
	\end{align}
	Applying Corollary~\ref{thm:tr-est} in \eqref{eq:rem_diff} yields
	\begin{align*}
		\left|\Tr\big(\Rcal_n(f, H, V)\big) - \Tr\big(\Rcal_n(f, H_k, V_k)\big)\right|
		\leq n\, c_{n}\, \|f^{(n)}\|_\infty \, \|V\|_n^{\, n-1} \, \|V - V_k\|_n.
	\end{align*}
	The latter along with Theorem~\ref{thm:existancessf} implies
	\begin{align*}
		\sup_{\substack{f \in C_c^{n+1}(\R)\\ \|f^{(n)}\|_\infty \leq 1}}
		\left| \int_\R f^{(n)}(\lambda) \, (\eta_n - \eta_{n,k}) \, d\lambda \right|
		\leq n \, c_{n}\, \|V\|_n^{\, n-1} \, \|V - V_k\|_n \to 0
	\end{align*}
	as $k \to \infty$, confirming \eqref{L1cv}.
	
	Since $\|\eta_n - \eta_{n,k}\|_{L^1(\R)} \to 0$ as $k \to \infty$, there exists a subsequence $\{\eta_{n,k_l}\}_{l=1}^\infty$ converging to $\eta_n$ almost everywhere.
	If $n$ is even, by Step 1, $\eta_{n,k_l} \geq 0$ for all $k_l$, so we have $\eta_n \geq 0$. For odd $n$, the assumption $V \geq 0$ (respectively, $V\leq 0$) ensures $V_k \geq 0$ (respectively, $V_k\leq 0$), and a similar argument establishes the desired result.
	\end{proof}

\section{Infinite-dimensional BMV conjecture}
\label{sec:bmv}

In this section we obtain results on the BMV conjecture for self-adjoint operators with essential spectra.

We recall that a nonnegative function $f\in C^\infty([0,\infty))$ is said to be completely monotone if
\[	(-1)^n f^{(n)}(t)\ge 0 \text{ for all } n\in\N \text{ and } t>0,\]
and is said to be a Bernstein function if
\[(-1)^{\,n-1} f^{(n)}(t)\ge 0 \text{ for all } n\in\N \text{ and } t>0.\]

The existence of the representation \eqref{lir}, which proves the finite-dimensional BMV conjecture for traces, is equivalent to the complete monotonicity of the function $t\mapsto\Tr(e^{H-tV})$ (see, e.g., \cite[Theorem 1.4]{berstein_book}). The trace of $e^{H-tV}$ is generally undefined for infinite-dimensional $H$ and $V$, and subtracting $e^H$ as a natural correcting term inside the trace when $V\in\Scal^1$ produces a function that is not completely monotone, but whose derivative is. The latter explains a modification of the integral representation \eqref{lir} obtained below in the case ${\rm dim}(\Hcal)=\infty$.

			\begin{thm}\label{thm:BMV-unbdd}
		Let $m\in\R$ and $H, V$ be self-adjoint operators in $\Hcal$ satisfying $H\geq  mI$ and $V\in\Scal^1$, $V\ge0$. Let $f\in C^\infty(G_{[0,\infty)})$, where $G_{[0,\infty)}$ is defined in \eqref{def:m-M}, satisfy $(-1)^{k-1}f^{(k)}\ge0$ for all $k\in\N$.  If $H$ is unbounded assume also that $f$ admits an extension to a function in $\cap_{j=1}^\infty\Wcal_j(\R)$. Then, there exist $b \geq 0$ and a positive measure $\mu$ on $(0,\infty)$ satisfying
			\[\int_{(0,\infty)} (1 \wedge s) \, d\mu(s) < \infty\]
			such that
			\begin{align}
\label{fBt}
				\Tr\big(f(H+tV) - f(H)\big) = b\,t + \int_{(0,\infty)} \big(1-e^{-ts}\big)\, d\mu(s)
			\end{align}
			for every $t \geq 0$. The pair $(b,\mu)$ is uniquely determined by the function on the left-hand side of \eqref{fBt}.
		\end{thm}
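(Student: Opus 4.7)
The plan is to reduce the claim to the classical Bernstein (L\'{e}vy--Khintchine) representation theorem by showing that $\phi(t):=\Tr\big(f(H+tV)-f(H)\big)$ is a Bernstein function on $[0,\infty)$ with $\phi(0)=0$. First I would verify that $\phi$ is well-defined: Theorem \ref{thm:differetiation} with $n=1$ (in the bounded $H$ case, after replacing $f$ by a compactly supported smooth function that agrees with it on a bounded enlargement of $G_{[0,t]}$, since $G_{[0,\infty)}$ may itself be unbounded) shows that $u\mapsto f(H+uV)$ is continuously differentiable in the $\|\cdot\|_1$-norm, so $f(H+tV)-f(H)\in\Scal^1$ and $\phi(0)=0$.

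Next, iterating Theorem \ref{thm:differetiation} for each $n\in\N$ yields $\phi\in C^n([0,\infty))$ together with the trace formula
\[
\phi^{(n)}(t)=\Tr\!\left(\frac{d^n}{du^n}f(H+uV)\Big|_{u=t}\right),
\]
obtained by commuting the trace with a derivative that exists in the $\|\cdot\|_1$-norm. I would then establish $(-1)^{n-1}\phi^{(n)}\ge 0$ by a parity argument. If $n$ is odd, the hypothesis on $f$ gives $f^{(n)}\ge 0$ on $G_{[0,\infty)}$, so Theorem \ref{thm:Ext-otte-unbdd} (or Theorem \ref{thm:Ext-otte} when $H$ is bounded, via a local smooth cutoff of $f$) combined with $V\ge 0$ implies $\phi^{(n)}\ge 0$, hence $(-1)^{n-1}\phi^{(n)}=\phi^{(n)}\ge 0$. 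If $n$ is even, the hypothesis gives $(-f)^{(n)}=-f^{(n)}\ge 0$, so applying the same theorem to $g=-f$ yields $-\phi^{(n)}\ge 0$, and again $(-1)^{n-1}\phi^{(n)}=-\phi^{(n)}\ge 0$. In particular, $\phi'\ge 0$, so $\phi$ is nondecreasing; combined with $\phi(0)=0$ this forces $\phi\ge 0$, and therefore $\phi$ is a Bernstein function.

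Finally, by the classical Bernstein representation theorem (see, e.g., \cite{berstein_book}), there exist unique $a,b\ge 0$ and a unique positive measure $\mu$ on $(0,\infty)$ with $\int_{(0,\infty)}(1\wedge s)\,d\mu(s)<\infty$ such that
\[
\phi(t)=a+bt+\int_{(0,\infty)}\big(1-e^{-ts}\big)\,d\mu(s).
\]
Setting $t=0$ forces $a=\phi(0)=0$, producing \eqref{fBt} with the stated uniqueness. The main technical hurdle I expect is the Schatten-norm differentiation at all orders and the regularity reduction of $f$ in the bounded $H$ case, where the hypothesis $f\in C^\infty(G_{[0,\infty)})$ does not a priori place $f$ into a global Wiener class $\Wcal_j(\R)$ (because $G_{[0,\infty)}$ is typically unbounded); once this local-versus-global distinction is handled, the sign analysis via Theorem \ref{thm:Ext-otte-unbdd} and Bernstein's theorem combine cleanly to yield the representation.
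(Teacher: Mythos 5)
Your proposal is correct and follows essentially the same route as the paper: localize $f$ on the bounded hulls $G_{[0,x+1)}$ (via the Wiener-class extension when $H$ is unbounded, or a compactly supported replacement when $H$ is bounded), differentiate under the trace using Theorem \ref{thm:differetiation}, obtain $(-1)^{n-1}\phi^{(n)}\ge 0$ from Theorem \ref{thm:Ext-otte}/Theorem \ref{thm:Ext-otte-unbdd} by the parity argument, and conclude with the L\'{e}vy--Khintchine representation of Bernstein functions from \cite{berstein_book}, with $\phi(0)=0$ killing the constant term. The only divergence is how you get $\phi\ge 0$: the paper invokes the positivity of the first-order spectral shift function (Theorem \ref{thm:positivity-ssf}) together with the trace formula \eqref{traceformula_inv}, whereas you deduce it more elementarily from $\phi(0)=0$ and $\phi'\ge 0$; both arguments are valid, and yours slightly decouples this theorem from the spectral shift function machinery.
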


		\begin{proof}
Let $x\geq 0$ and let $g\in \cap_{j=1}^\infty \Wcal_j(\R)$ be such that $g|_{G_{[0, x+1)}}=f|_{G_{[0, x+1)}}$. If $H$ is unbounded, such $g$ exists by the assumption of the theorem; if $H$ is bounded, such $g$ exists by the observation made at the beginning of the proof of Theorem \ref{thm:Ext-otte}.
			
By \cite[Consequence of Theorem 4.5]{BiSo73} (alternatively, see \cite[Theorem 3.3.8]{ST19}), for each $t\in [0,x+1)$ we have
			\begin{align*}
				f(H+tV) - f(H) = g(H+tV) - g(H) = T_{g^{[1]}}^{H+tV, V}(tV) \in \Scal^1.
			\end{align*}
Hence, the function
\begin{align}
\label{phidef}
\phi(t) = \Tr\big(f(H + tV) - f(H)\big)
\end{align}
is defined for every $t \in [0,x+1)$ and
$$\phi(t) = \Tr\,\big(g(H + tV) - g(H)\big),\quad t\in [0,x+1).$$
By Theorem \ref{thm:differetiation}, for all $s\in[0,x+1)$ and $n \in \N$,
$$\phi^{(n)}(s)=\Tr\Big(\frac{d^n}{dt^n}g(H+tV)\Big|_{t=s}\Big).$$
Hence, by Theorem \ref{thm:Ext-otte-unbdd}, $(-1)^{n-1}\phi^{(n)}(s)\ge 0$ for all $s\in(0,x+1)$ and $n \in \N$.

Applying the above reasoning to every $x\ge 0$ implies that the definition \eqref{phidef} of $\phi$ extends to all $t\in [0,\infty)$, that $\phi\in C^\infty([0,\infty))$, and
\[(-1)^{n-1}\phi^{(n)}(s)\ge 0,\quad s\in(0,\infty),\; n \in \N.\] Since $\eta_{1,H,V}\ge 0$ by Theorem \ref{thm:positivity-ssf}, it follows from $f'\ge 0$ and the representation \eqref{traceformula_inv} that $\phi\ge 0$. Thus, $\phi$ is a Bernstein function.
Applying \cite[Theorem 3.2]{berstein_book} to %the Bernstein function
$\phi$ completes the proof.

%We next show that $\phi$ is a Bernstein function, that is, {\cblue $\phi$ is a nonnegative function in $C^\infty((0,\infty))$} satisfying \[(-1)^{k-1} \phi^{(k)}(s) \geq 0 \quad \text{for all } k \in \N \text{ and } s > 0.\]
\end{proof}

\begin{cor}\label{cor:BMV-1}
Let $m,\lambda,r\in\R$,  $r\ge 1$, $\lambda<m$, and let $H,V$ be self-adjoint  operators in $\Hcal$ such that  $H\ge mI$ and $V\in\Scal^1$, $V\ge0$. Then there exist $b_1,b_2\le0$ and positive measures $\mu,\nu$ on $(0,\infty)$ satisfying
\[\int_{(0,\infty)} (1 \wedge s) \, d\mu(s)<\infty\;\, \text{ and }\; \int_{(0,\infty)} (1 \wedge s) \, d\nu(s) < \infty\]
such that
\begin{align}
\label{eq:bmv-1}&\Tr\big(e^{-H-tV} - e^{-H}\big) = b_1 t + \int_{(0,\infty)} \big(e^{-ts}-1\big)\, d\mu(s),\\
\label{eq:bmv-2}&\Tr\big((H+tV-\lambda I)^{-r}-(H-\lambda I)^{-r}\big) = b_2 t + \int_{(0,\infty)} \big(e^{-ts}-1\big)\, d\nu(s)
\end{align}
for every $t \geq 0$.  The pair $(b_1,\mu)$ is uniquely determined by the function on the left-hand side of \eqref{eq:bmv-1} and the pair $(b_2,\nu)$ is uniquely determined by the function on the left-hand side of \eqref{eq:bmv-2}.
\end{cor}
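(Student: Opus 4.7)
The plan is to apply Theorem~\ref{thm:BMV-unbdd} to the two Bernstein-type functions $f_1(x) := -e^{-x}$ and $f_2(x) := -(x-\lambda)^{-r}$, and then rewrite each resulting representation using $1 - e^{-ts} = -(e^{-ts} - 1)$, absorbing the sign into $b_1, b_2 \le 0$ and relabeling the measure as $\nu$ in the second case.

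First I verify the hypotheses of Theorem~\ref{thm:BMV-unbdd}. Since $H \ge mI$ and $V \ge 0$, the spectra $\sigma(H + tV)$ lie in $[m, \infty)$ for every $t \ge 0$, so $G_{[0,\infty)} \subset [m, \infty)$. For $f_2$ the assumption $\lambda < m$ additionally places $G_{[0,\infty)} \subset (\lambda, \infty)$, where $f_2$ is smooth. A direct computation gives
$$(-1)^{k-1} f_1^{(k)}(x) = e^{-x}, \qquad (-1)^{k-1} f_2^{(k)}(x) = r(r+1)\cdots(r+k-1)(x-\lambda)^{-r-k},$$
both of which are nonnegative on $G_{[0,\infty)}$ for every $k \in \N$.

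When $H$ is unbounded, I must supply global extensions $\tilde f_1, \tilde f_2 \in \bigcap_{n=1}^\infty \Wcal_n(\R)$ agreeing with $f_1, f_2$ on $G_{[0,\infty)}$. I multiply each $f_j$ by a smooth cutoff $\chi_j \in C^\infty(\R)$ that equals $1$ on $[m, \infty)$ and vanishes on a left half-line (chosen strictly to the right of $\lambda$ in the case of $f_2$). Then $\tilde f_1 = \chi_1 f_1$ lies in the Schwartz class, because it has compact support toward $-\infty$ and decays exponentially at $+\infty$; hence $\widehat{\tilde f_1^{(n)}}$ is Schwartz and, in particular, lies in $L^1(\R)$. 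For $\tilde f_2 = \chi_2 f_2$, the derivative $\tilde f_2^{(n)}(x)$ decays like $(x-\lambda)^{-r-n}$ as $x \to \infty$; since $r \ge 1$, both $\tilde f_2^{(n)}$ and $\tilde f_2^{(n+1)}$ lie in $L^2(\R)$, so by the $L^2$-based sufficient condition recorded just after the definition of $\Wcal_n(\R)$, $\tilde f_2 \in \bigcap_{n=1}^\infty \Wcal_n(\R)$.

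Applying Theorem~\ref{thm:BMV-unbdd} to each $f_j$ (or to its extension $\tilde f_j$ when $H$ is unbounded) produces a constant $b \ge 0$ and a positive measure $\mu$ on $(0, \infty)$ with $\int_{(0,\infty)} (1 \wedge s)\, d\mu(s) < \infty$ such that
$$\Tr\!\big(f_j(H+tV) - f_j(H)\big) = b t + \int_{(0,\infty)} (1 - e^{-ts}) \, d\mu(s), \qquad t \ge 0.$$
Substituting the definitions of $f_1, f_2$ and multiplying through by $-1$ (and relabeling $\mu$ as $\nu$ for $j = 2$) immediately yields \eqref{eq:bmv-1} and \eqref{eq:bmv-2} with $b_1 = -b \le 0$ and $b_2 = -b \le 0$. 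Uniqueness of $(b_1, \mu)$ and $(b_2, \nu)$ follows directly from the uniqueness clause in Theorem~\ref{thm:BMV-unbdd}, since the left-hand sides of \eqref{eq:bmv-1} and \eqref{eq:bmv-2} are just the negatives of $\Tr(f_j(H+tV) - f_j(H))$. The main obstacle is the construction of a global extension for $f_2$ in the unbounded case, which is handled by combining the cutoff construction with the $L^2$-sufficient condition for membership in $\Wcal_n(\R)$; the exponential decay of $f_1$ makes the analogous step essentially automatic.
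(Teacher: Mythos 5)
Your proof is correct and follows essentially the same route as the paper: apply Theorem~\ref{thm:BMV-unbdd} to $f_1(x)=-e^{-x}$ and $f_2(x)=-(x-\lambda)^{-r}$ on $[m,\infty)$, extend by a smooth cutoff to obtain functions in $\bigcap_{j}\Wcal_j(\R)$ when $H$ is unbounded, and flip the sign to get $b_1,b_2\le 0$. Your treatment of the cutoff and of the Wiener-class membership is if anything a bit more careful and explicit than the paper's, but it is the same argument.
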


\begin{proof}
On $[m,\infty)$, consider the functions $f_1(x)=-e^{-x}$, and  $f_2(x)=-(x-\lambda)^{-r}$. Clearly, $f_i\in C^\infty([m,\infty))$ and $f_i^{(k)}\in L^2([m,\infty))$ for all $k\in\N$, $i=1,2$. Let $g_i\in C^{\infty}(\R)$, $i=1,2$, be such that
\begin{align}
g_i(x)=\begin{cases}
0 & \text{ if } x\leq  m-\lambda-1\\
f_i(x) & \text{ if } x\geq  m-\lambda
\end{cases}
\end{align}
for $i=1, 2$. Then, $g_i\in \cap_{j=1}^\infty \Wcal_j(\R)$, $i=1,2$.
Applying Theorem~\ref{thm:BMV-unbdd} to $f_1(x)$ gives \eqref{eq:bmv-1}, and to $f_2(x)$ gives \eqref{eq:bmv-2}.
\end{proof}

The next result extends \cite[Theorem 2]{LiSe12} to the infinite-dimensional setting and non-trace-class perturbations. It is obtained by applying Theorem~\ref{thm:Ext-otte} (if $H$ is bounded) or Theorem~\ref{thm:Ext-otte-unbdd} (if $H$ is unbounded) to $f(x) = (x-\lambda)^p$ on $[m, \infty)$. If $p\in\N$, then the assumption on $\lambda$ in Proposition \ref{cor:BMV-0} below can be relaxed from $\lambda<m$ to $\lambda\le m$.

\begin{ppsn}\label{cor:BMV-0}
	Let $p, \lambda, m\in  \R, \lambda<m$, and $k\in\N$.  Let $H, V $ be self-adjoint operators in $\Hcal$ satisfying $H\geq mI$, $V\ge 0$ and $V\in \Scal^k$. Then, the following assertions hold.
	\begin{enumerate}[(i)]
		\item If $H\in\bh$, $p>0$, and $1 \le k \le \lceil p \rceil$, then\; $\Tr\big(\frac{d^k}{dt^k}(H+tV-\lambda I)^p\big|_{t=s}\big)\geq 0$\; for all $s\geq 0$.\\
		
		\item If $H\in\bh$, $p > 0$, and $k \ge \lceil p \rceil$, then\; $(-1)^{k-\lceil p \rceil}\Tr\big(\frac{d^k}{dt^k}(H+tV-\lambda I)^p\big|_{t=s}\big)\geq 0$\; for all $s\geq 0$.\\
		
		\item  If $p<0$, %and $k \ge 1$,
then\; $(-1)^k\,  \Tr\big(\frac{d^k}{dt^k}(H+tV-\lambda I)^p\big|_{t=s}\big)\geq 0$\; for all $s\geq 0$.
	\end{enumerate}
\end{ppsn}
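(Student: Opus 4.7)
The plan is to apply Theorem \ref{thm:Ext-otte} (when $H\in\bh$) or Theorem \ref{thm:Ext-otte-unbdd} (when $H$ is unbounded) to $f(x)=(x-\lambda)^p$ after a sign correction. Since $H+tV\ge mI$ for every $t\ge 0$ and $\lambda<m$, the set $G_{[0,\infty)}$ is contained in $[m,\infty)\subset(\lambda,\infty)$, so $f$ is smooth on a neighborhood of $G_{[0,\infty)}$ and
\[
f^{(k)}(x)=p(p-1)\cdots(p-k+1)\,(x-\lambda)^{p-k}.
\]
Because $(x-\lambda)^{p-k}>0$ on $G_{[0,\infty)}$, the sign of $f^{(k)}$ on that set is controlled entirely by the falling factorial $[p]_k:=p(p-1)\cdots(p-k+1)$.

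The first step will be a routine sign count. In case~(i), every factor $p-j$ with $0\le j\le k-1$ satisfies $j\le\lceil p\rceil-1<p$, so $[p]_k>0$ and hence $f^{(k)}\ge 0$ on $G_{[0,\infty)}$. In case~(ii), the first $\lceil p\rceil$ factors are positive while the remaining $k-\lceil p\rceil$ are non-positive (with the convention that $f^{(k)}\equiv 0$ whenever $p\in\N$ and $k>p$), so $(-1)^{k-\lceil p\rceil}f^{(k)}\ge 0$. In case~(iii), all $k$ factors are strictly negative, giving $(-1)^k f^{(k)}\ge 0$.

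In the bounded case $H\in\bh$, applying Theorem~\ref{thm:Ext-otte}, part (i) if $k$ is even and part (ii) (using $V\ge 0$) if $k$ is odd, to the function $\epsilon f$ with the sign $\epsilon\in\{-1,+1\}$ selected so that $\epsilon f^{(k)}\ge 0$ on $G_{[0,\infty)}$, immediately yields~(i), (ii), and the bounded subcase of~(iii). Note that $f\in C^{k+1}(G_{[0,\infty)})$ because $G_{[0,\infty)}\subset(\lambda,\infty)$, which is what Theorem~\ref{thm:Ext-otte} requires.

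The only step needing care is case~(iii) for unbounded $H$, where Theorem~\ref{thm:Ext-otte-unbdd} demands an extension $g\in\cap_{j=1}^{k}\Wcal_j(\R)$ with $g|_{G_{[0,\infty)}}=f|_{G_{[0,\infty)}}$. I will take $g(x)=\chi(x)(x-\lambda)^p$, where $\chi\in C^\infty(\R)$ is a cutoff equal to $0$ for $x\le \lambda+\tfrac{m-\lambda}{2}$ and to $1$ on $[m,\infty)$. Since $p<0$, the derivatives satisfy $g^{(j)}(x)=O(|x|^{p-j})$ as $|x|\to\infty$, so $g^{(j)},g^{(j+1)}\in L^2(\R)$ for every $j\ge 1$; by the $L^2$-sufficient condition noted in Section~\ref{sec:Preliminaries}, this places $g$ in $\Wcal_j(\R)$ for every $1\le j\le k$. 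Applying Theorem~\ref{thm:Ext-otte-unbdd} to $(-1)^k g$ then gives $(-1)^k\Tr\big(\tfrac{d^k}{dt^k}(H+tV-\lambda I)^p\big|_{t=s}\big)\ge 0$. Constructing this Wiener-class extension and verifying the decay of its derivatives is the only non-trivial piece of the argument; the rest reduces to the falling-factorial sign count above and a direct invocation of the two positivity theorems of Section~\ref{sec:finite}.
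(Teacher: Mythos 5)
Your proposal is correct and follows exactly the route the paper indicates for Proposition \ref{cor:BMV-0}: applying Theorem~\ref{thm:Ext-otte} (for $H\in\bh$) or Theorem~\ref{thm:Ext-otte-unbdd} (for unbounded $H$, case (iii)) to $f(x)=(x-\lambda)^p$ on $[m,\infty)$, with the sign of the falling factorial $p(p-1)\cdots(p-k+1)$ dictating the sign factor. Your added details -- the sign count in the three cases and the smooth cutoff extension $g=\chi\cdot f$ with $g^{(j)},g^{(j+1)}\in L^2(\R)$ placing $g$ in $\cap_{j=1}^{k}\Wcal_j(\R)$ -- correctly fill in what the paper leaves implicit.
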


A particular instance of Proposition \ref{cor:BMV-0}(i) ensures that if $\dim(\Hcal)<\infty$, $H\geq 0$, and $V\ge 0$, then for all $n,k\in\N$, $$\Tr\!\left(\frac{d^k}{dt^k}(H+tV)^n\Big|_{t=0}\right)
=\frac{d^k}{dt^k}\Tr\big((H+tV)^n\big)\Big|_{t=0}\ge0.$$
The latter nonnegativity of the derivatives is equivalent to the property that the polynomials $t\mapsto\Tr((H+tV)^n)$ have only nonnegative coefficients,  which, by \cite[Theorem 1]{LiSe04}, is equivalent to the existence of the representation \eqref{lir}.

 The following result generalizes the complete monotonicity of the trace of higher order operator derivatives obtained in parts (ii) and (iii) of Proposition \ref{cor:BMV-0}.

%As a first step toward the BMV conjecture for a non-trace class perturbation $V$ of an operator $H$ with essential spectrum, we obtain the following generalization of Proposition~\ref{cor:BMV-0}.

\begin{thm}
\label{bmvsn}
Let $m\in\R$, $n\in\N$, and $H, V$ be two self-adjoint operators in $\Hcal$ satisfying $H\geq mI$ and $V\in\Scal^n$, $V\ge0$. Let $f\in C^\infty(G_{[0,\infty)})$, where $G_{[0,\infty)}$ is defined in \eqref{def:m-M}, satisfy $(-1)^{k-1}f^{(k)}\ge0$ for all $k\in\N$ such that $k\ge n$. If $H$ is unbounded assume also that $f$ admits an extension to a function in $\cap_{j=1}^\infty\Wcal_j(\R)$. Then,
%$[0,\infty)\ni t\mapsto (-1)^{n-1}\Tr\big(\Rcal_n(f,H+tV,V)\big)$ is the Laplace transform of a positive measure. That is
there exists a finite measure $\mu$ on $[0,\infty)$ satisfying $(-1)^{n-1}\mu\ge 0$ such that
\begin{align}\label{fBtn}
\Tr\!\left(	\frac{d^n}{d\tau^n} f(H + \tau V)\Big|_{\tau=t}\right) = \int_{[0,\infty)}(-1)^{n-1}e^{-ts}\,d\mu(s)
\end{align}
for every $t\ge0$. The measure $\mu$ is uniquely determined by the function on the left-hand side of \eqref{fBtn}.
\end{thm}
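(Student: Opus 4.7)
The plan is to establish that $g(t) := (-1)^{n-1}\psi(t)$, where $\psi$ denotes the function on the left-hand side of \eqref{fBtn}, is completely monotone on $[0,\infty)$, and then invoke the classical Bernstein theorem to obtain the representation. The Bernstein step is routine once complete monotonicity is in hand, so the main work is in producing the correct sign for every derivative of $\psi$.

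The first task is to show $\psi\in C^\infty([0,\infty))$ with
$$\psi^{(k)}(t)=(n+k)!\,\Tr\!\Big(T_{f^{[n+k]}}^{(H+tV)^{n+k+1}}\!\big((V)^{n+k}\big)\Big),\quad t\ge 0,\ k\ge 0.$$
The case $k=0$ is Theorem \ref{thm:differetiation}. For the inductive step, I would apply Lemma \ref{lem:moi_pert_form} $n+k$ times to telescope
$$T_{f^{[n+k-1]}}^{(H+(t+h)V)^{n+k}}\!\big((V)^{n+k-1}\big)-T_{f^{[n+k-1]}}^{(H+tV)^{n+k}}\!\big((V)^{n+k-1}\big)=h\sum_{i=1}^{n+k}T_{f^{[n+k]}}^{(H+(t+h)V)^i,\,(H+tV)^{n+k+1-i}}\!\big((V)^{n+k}\big).$$
Since $V\in\Scal^n\subset\Scal^{n+k}$ (with $\|V\|_{n+k}\le\|V\|_n$), every summand on the right belongs to $\Scal^1$ by Theorem \ref{inv-thm} applied with $\alpha_1=\cdots=\alpha_{n+k}=n+k$; since $H+(t+h)V\to H+tV$ in operator norm as $h\to 0$ and $V$ is fixed, Lemma \ref{lem:moi-cont} applied with $V_k\equiv V$ gives that each summand converges in $\|\cdot\|_1$ to $T_{f^{[n+k]}}^{(H+tV)^{n+k+1}}\!\big((V)^{n+k}\big)$. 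Dividing by $h$, passing to the limit under the trace, and invoking the inductive hypothesis then yields the displayed formula.

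With this formula in hand, I would apply Theorem \ref{thm:Ext-otte-unbdd} (or Theorem \ref{thm:Ext-otte} when $H$ is bounded) with order $n$ replaced by $n+k$ and function $(-1)^{n+k-1}f$, whose $(n+k)$-th derivative is nonnegative on $G_{[0,\infty)}$ by hypothesis. This gives $(-1)^{n+k-1}\psi^{(k)}(t)\ge 0$ for all $k\ge 0$ and $t\ge 0$. Writing $g(t):=(-1)^{n-1}\psi(t)$, this translates to $(-1)^k g^{(k)}(t)\ge 0$ for all $k\ge 0$ and $t>0$, so $g$ is completely monotone. Since $g$ is also continuous on $[0,\infty)$ with $g(0)<\infty$, Bernstein's theorem produces a unique finite positive Borel measure $\tilde\mu$ on $[0,\infty)$ with $g(t)=\int_{[0,\infty)}e^{-ts}\,d\tilde\mu(s)$, and multiplying through by $(-1)^{n-1}$ yields \eqref{fBtn} with $\mu$ as in the statement. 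Uniqueness of $\mu$ is inherited from the uniqueness in Bernstein's theorem.

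The hardest part will be the rigorous justification of higher-order differentiability of $\psi$ under the trace: Theorem \ref{thm:differetiation} only provides $n$-fold $\Scal^1$-differentiability for $V\in\Scal^n$, so one must leverage the Schatten embedding $\Scal^n\subset\Scal^{n+k}$, the trace estimate of Theorem \ref{inv-thm}, and the MOI continuity Lemma \ref{lem:moi-cont} to push the differentiation to all orders. In the unbounded case the assumed extension of $f$ to $\cap_{j=1}^\infty\Wcal_j(\R)$ is crucial, as it ensures each MOI $T_{f^{[n+k]}}^{(H+tV)^{n+k+1}}\!\big((V)^{n+k}\big)$ is well-defined and permits the application of Theorem \ref{thm:Ext-otte-unbdd} with arbitrarily large derivative order.
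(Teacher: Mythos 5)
Your proposal is correct and follows essentially the same route as the paper: identify $\psi^{(k)}(t)$ with $\Tr\big(\tfrac{d^{n+k}}{d\tau^{n+k}}f(H+\tau V)\big|_{\tau=t}\big)$, deduce $(-1)^{n+k-1}\psi^{(k)}\ge 0$ from Theorem \ref{thm:Ext-otte-unbdd} (or Theorem \ref{thm:Ext-otte} for bounded $H$), and apply Bernstein's theorem to the completely monotone function $(-1)^{n-1}\psi$; the paper obtains the differentiation-under-the-trace step simply by invoking Theorem \ref{thm:differetiation} at order $n+k$ (legitimate since $V\in\Scal^n\subset\Scal^{n+k}$ and the extension lies in $\cap_{j}\Wcal_j(\R)$) rather than re-deriving it by your telescoping argument. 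Two minor repairs: Theorem \ref{inv-thm} cannot be cited with $\alpha_1=\cdots=\alpha_{n+k}=n+k$, since then $\alpha=1\notin(1,\infty)$ --- the $\Scal^1$-membership of the telescoped terms should instead come from Definition \ref{def:moi} with $\alpha=1$ and the integral representation used in the proof of Lemma \ref{lem:moi-cont} (justified in \cite{PSS}) --- and, as in the paper, one should localize to bounded parameter intervals $[0,x+1)$ and work with an extension $g\in\cap_{j}\Wcal_j(\R)$ of $f|_{G_{[0,x+1)}}$ (compactly supported when $H$ is bounded), so that every MOI symbol is a globally defined Wiener-class function.
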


	\begin{proof}
 Let $x\ge 0$ and let $g\in \cap_{j=1}^\infty \Wcal_j(\R)$ be such that  $g|_{G_{[0,x+1)}}=f|_{G_{[0,x+1)}}$. By Theorem \ref{thm:differetiation} applied to $g$ and by the equality $f(H+tV)=g(H+tV)$ for all $t\in [0,x+1)$, the function	
\begin{align*}
\phi(t)=\Tr\!\left(\frac{d^n}{d\tau^n} f(H + \tau V)\Big|_{\tau=t}\right)
\end{align*}
is well defined for all $t\in [0,x+1)$.
By Theorem \ref{thm:differetiation} , we have
\begin{align*}%\label{eq:trace_id_0}
\phi^{(k)}(t)=\Tr\!\left(
\frac{d^{n+k}}{d\tau^{n+k}} f(H + \tau V)\Big|_{\tau=t}\right), \quad t\in [0,x+1).
\end{align*}
Combining the latter with Theorem \ref{thm:Ext-otte-unbdd} implies that $\phi^{(k)}$ has the same sign as $f^{(n+k)}$ for $k\in\N \cup \{0\}$. Therefore,
\begin{align}
\label{cdercm}
(-1)^k \big((-1)^{n-1}\phi\big)^{(k)}(t)\geq 0
\end{align}
for all $t\in(0,x+1)$ and all $k\in\N \cup \{0\}$.

Applying the above reasoning to every $x\ge 0$ implies that the definition of $\phi$ extends to $[0,\infty)$ and that \eqref{cdercm} holds for all $t>0$. Thus, $(-1)^{n-1}\phi$ is a completely monotone function. By Bernstein's theorem \cite[Theorem 1.4]{berstein_book}, the latter property is equivalent to the existence of the unique measure  $\mu$ satisfying \eqref{fBtn}. The finiteness of $\mu$ follows from \cite[Proposition 1.2]{berstein_book}.
	\end{proof}	
	
%\begin{rmrk}
 By the method developed in this section, the time-dependent higher order Taylor remainder $t\mapsto\Tr\big(\Rcal_n(f,H+tV,V)\big)$ can be expressed as the Laplace transform of a time-independent finite measure, whereas the alternative representation provided in Theorem~\ref{thm:existancessf} yields a measure that depends on $t$. Under the assumptions of Theorem~\ref{bmvsn}, we obtain
\begin{align}
\label{fRtn}
\Tr\big(\Rcal_n(f,H+tV,V)\big)=\int_{[0,\infty)}(-1)^{n-1}e^{-ts}\,d\nu(s),\quad t\geq 0,
\end{align}
where the measure $\nu$ satisfies $(-1)^{n-1}\nu\ge 0$ and is uniquely determined by the function on the left-hand side of \eqref{fRtn}.
%The representation \eqref{fRtn} follows from Theorem~\ref{bmvsn} combined with the integral representation for the remainder \cite[Theorem~5.4.3]{ST19}, the results of Lemma \ref{lem:moi-cont}, Corollary \ref{thm:tr-est}, Theorem \ref{thm:differetiation}, and Lebesgue's dominated convergence.
%\end{rmrk}
\smallskip

We defer a detailed analysis of non–trace-class perturbations to future work.

\smallskip
	
	\noindent\textit{Acknowledgment}: C. Pradhan gratefully acknowledges support from the Fulbright-Nehru Postdoctoral Fellowship. A. Skripka is supported in part by Simons Foundation Grant MP-TSM-00002648.

\end{document}